\newtheorem{theorem}{Theorem}	
\newtheorem{lemma}{Lemma}[section]		
\newtheorem{proposition}{Proposition}		
\newtheorem{definition}{Definition}	
\newtheorem{claim}{Cliam}[section]
\title[The spherical dual transform is an isometry]
{
The spherical dual transform is an isometry \\ 
for spherical Wulff shapes 
}
\thanks{\color{black} This work is partially supported by JSPS KAKENHI Grant Number 26610035.}
\author{
Huhe Han}
\address{Graduate School of Environment and Information Sciences,Yokohama National University, {\color{black}Yokohama 240-8501,} Japan}
\email{han-huhe-bx@ynu.jp}
\author{Takashi Nishimura
}
\address{Research Group of Mathematical Sciences,  
Research Institute of Environment and Information Sciences,  
Yokohama National University, 
Yokohama 240-8501, Japan}
\email{nishimura-takashi-yx@ynu.jp}
\begin{document}
\begin{abstract}
{\color{black}
A spherical Wulff shape is the spherical counterpart of a Wulff shape which is the well-known geometric model 
of a crystal at equilib{\color{black} r}ium introduced by G. Wulff in 1901.    
As same as a Wulff shape, each spherical Wulff shape has its unique dual.    
The spherical dual transform for spherical Wulff shapes 
is the mapping which maps a spherical Wulff shape to its spherical dual {\color{black}Wulff shape}.   
In this paper, it is shown that the spherical dual transform {\color{black}for spherical Wulff shapes} 
is an isometry with respect to the Pompeiu-Hausdorff metric.  
} 
%
\end{abstract}

\subjclass[2010]{\color{black}47N10, 52A30, 82D25} 
\keywords{\color{black}Spherical dual transform, extension of spherical dual transform, 
spherical polar transform, spherical Wulff shape, spherical dual Wulff shape, 
spherical convex body, isometry, bi-Lipschitz, Pompeiu-Hausdorff metric.  
} 
\maketitle  

\section{Introduction}
\par
{\color{black} 
Throughout this paper, 
let $n$ (resp., $S^n$) be a positive integer 
(resp., the unit sphere in $\mathbb{R}^{n+1}$).    
For any point $P\in S^n$, let $H({\color{black}P})$ 
be the closed hemisphere centered at $P$, 
namely, $H(P)$ is the set consisting of $Q\in S^n$ 
satisfying $P\cdot Q\ge 0$, 
where the dot in the center stands for the scalar product of two vectors 
$P, Q\in \mathbb{R}^{n+1}$.     
For any non-empty subset $W\subset S^n$, 
the {\it spherical polar set of $W$}, denoted by 
$W^\circ$, is defined as follows: 
\[
W^\circ = \bigcap_{P\in W}H(P).
\]   
\par 
In \cite{nishimurasakemi2}, the spherical polar set plays an essential role 
for investigating a Wulff shape, which is 
the geometric model of a crystal at equilibrium 
introduced by G.~Wulff in \cite{wulff}. 
\par 
Let $\mathcal{H}(S^n)$ be the set 
consisting of non-empty closed subsets of 
$S^n$.    
It is well-known that $\mathcal{H}(S^n)$ is a complete metric space 
with respect to 
the Pompeiu-Hausdorff metric (for instance, see \cite{barnsley, falconer}). 
Let $\mathcal{H}^\circ (S^n)$ be 
the {\color{black}subspace} of $\mathcal{H}(S^n)$ consisting of 
non-empty closed subset $W$ of $S^n$ 
such that $W^\circ\ne \emptyset$.  
The {\it spherical polar transform} 
$\bigcirc: \mathcal{H}^\circ(S^n)\to \mathcal{H}^\circ(S^n)$ is defined by 
$\bigcirc(W)=W^\circ$.
Since $W\subset W^{\circ\circ}$ for any $W\in \mathcal{H}^\circ(S^n)$ 
by Lemma 2.2 of \cite{nishimurasakemi2}, it follows that 
$W^\circ\in \mathcal{H}^\circ(S^n)$ 
for any $W\in \mathcal{H}^\circ(S^n)$.   
Thus, the spherical polar transform $\bigcirc$ is well-defined
\footnote{\color{black} 
Since $(S^n)^\circ =\emptyset$ for any $n\in \mathbb{N}$, 
the spherical polar transform defined in \cite{aperture} 
should be understood as 
$\bigcirc: \mathcal{H}^\circ (S^n)\to \mathcal{H}^\circ(S^n)$.}.      
\par 
In \cite{aperture}, crystal growth is investigated 
by introducing a geometric model of 
a certain growing crystal in $\mathbb{R}^2$.     
One of the powerful tools in \cite{aperture} is  
the {spherical polar transform} 
$\bigcirc: \mathcal{H}^{\circ}(S^{2})\to \mathcal{H}^{\circ}(S^{2})$.     
Especially, for studying the dissolving process 
of the geometric model introduced in \cite{aperture}, 
the spherical polar transform {\color{black}is} indispensable 
since it enables one to analyze in detail  
the image of dissolving one-parameter family of spherical 
Wulff shapes.      
Notice that it is impossible to give a similar analysis in the 
Euclidean space $\mathbb{R}^n$ since the corresponding image 
in $\mathbb{R}^n$ is divergent.      
We consider that 
the spherical polar transform may be more applicable 
{\color{black}especially} 
to investigate 
dissolving process of Wulff shapes 
and that it is significant to obtain useful properties 
of the spherical polar transform.   
\par 
In this paper, motivated by these considerations, 
we investigate natural restrictions of the spherical polar transform.   
{\color{black}
The most natural subspace for the restriction 
of spherical polar transform is 
\[\mathcal{H}_{\rm Wulff}(S^n, P)
\] 
defined 
as follows.   
\begin{definition}\label{definition closure}
{\rm 
\begin{enumerate}
\item\quad 
Let $W$ be a subset of $S^n$.    
Suppose that there exists a point $P\in S^n$ such that 
$W\cap H(P)=\emptyset$.   Then, $W$ is said to be 
{\it hemispherical}.   
\item\quad 
Let $W\subset S^{n}$ be a hemispherical subset.   
Let $P, Q$ be two points of $W$.     
Then, 
the following arc is denoted by $PQ$:  
\[
PQ=\left\{\left.\frac{(1-t)P+tQ}{||(1-t)P+tQ||}\in S^{n}\; \right|\; 0\le t\le 1\right\}.  
\]
\item\quad 
Let $W\subset S^{n}$ be a hemispherical subset.   
Suppose that 
$PQ\subset W$ for any $P, Q\in W$.     Then,  
$W$ is said to be {\it spherical convex}.   
{\color{black}
\item\quad  
Let $W\subset S^{n}$ be a hemispherical subset.   
Suppose that $W$ is closed, spherical convex and has an interior point.   
Then,  
$W$ is said to be a {\it spherical convex body}.   
}
\item
\quad 
For any point $P$ of $S^n$, 
let $\mathcal{H}_{\rm Wulff}(S^n, P)$ be the following set: 
\begin{eqnarray*}
\mathcal{H}_{\rm Wulff}(S^n, P) & = &  
\left\{
W\in \mathcal{H}(S^n)\; \right| 
\; W\cap H(-P)=\emptyset, P\in \mbox{int}(W),  \\ 
{ } & { } & \qquad\qquad  
\left. W\mbox{ is a spherical convex body}
\right\}, 
\end{eqnarray*}
where $\mbox{int}(W)$ stands for the set consisting of interior points of $W$.        
The topological closure of 
$\mathcal{H}_{\rm Wulff}(S^n, P)$ is denoted by $\overline{\mathcal{H}_{\rm Wulff}(S^n, P)}$. 
\item\quad 
For any $P\in S^n$, an element of $\mathcal{H}_{\rm Wulff}(S^n, P)$ 
is called a {\it spherical Wulff shape}.
\end{enumerate} 
}
\end{definition}
\noindent 
It is known that a Wulff shape in $\mathbb{R}^n$ can be characterized 
as a convex body of $\mathbb{R}^n$ such that the origin is an interior point of 
it, namely, as a compact and convex subset of $\mathbb{R}^n$ 
such that the origin is an interior point of it (\cite{taylor}).   
Hence, the definition of spherical Wulff shape is reasonable.   
The restriction of $\bigcirc$  
to $\mathcal{H}_{\rm Wulff}(S^{n}, P)$ 
(resp., $\overline{\mathcal{H}_{\rm Wulff}(S^{n}, P)}$) 
is 
called the {\it spherical dual transform relative to $P$} 
(resp., {\it extension of the spherical dual transform relative to $P$}) 
and is denoted by 
$\bigcirc_{{\rm Wulff},P}$ (resp., $\overline{\bigcirc_{{\rm Wulff},P}}$).   
The set $\bigcirc(W)=W^\circ$ is called the 
{\it spherical dual Wulff shape} of $W$ 
if $W$ is a spherical Wulff shape.      
Thus, it is reasonable to call $\bigcirc_{{\rm Wulff},P}$ 
the spherical dual transform.       
It is not difficult to have the following  
(cf. {\color{black}Proposition \ref{lemma 1.1}} in Subsection 
\ref{subsection 5.1}). 
{\color{black}
\begin{eqnarray*}
\bigcirc_{{\rm Wulff},P}: 
\mathcal{H}_{\rm Wulff}(S^{n}, P) 
& \to & 
\mathcal{H}_{\rm Wulff}(S^{n}, P) 
\mbox{ is well-defined and bijective}, \\     
\overline{\bigcirc_{{\rm Wulff},P}}: 
\overline{\mathcal{H}_{\rm Wulff}(S^{n}, P)}
& \to & 
\overline{\mathcal{H}_{\rm Wulff}(S^{n}, P)}   
\mbox{ is well-defined and bijective}. 
\end{eqnarray*}
}
\par 
The main purpose of this paper is to show the following:   
\begin{theorem}\label{theorem 1}
Let $P$ be a point of $S^n$.   {\color{black}
Then, with respect to the Pompeiu-Hausdorff metric, 
the following two hold:   
\begin{enumerate}
\item {\color{black}The spherical dual transform relative to $P$} 
\[
\bigcirc_{{\rm Wulff},P} : 
{\color{black}{\mathcal{H}_{\rm Wulff}(S^{n}, P)}}\rightarrow  
{\color{black}{\mathcal{H}_{\rm Wulff}(S^{n}, P)}}
\]
is an isometry. 
\item The extension of spherical dual transform 
relative to $P$ 
\[
\overline{\bigcirc_{{\rm Wulff},P}} : 
{\color{black}\overline{\mathcal{H}_{\rm Wulff}(S^{n}, P)}}\rightarrow  
{\color{black}\overline{\mathcal{H}_{\rm Wulff}(S^{n}, P)}}
\]
is an isometry. 
\end{enumerate}
}
\end{theorem}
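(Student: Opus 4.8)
Proof proposal planThe plan is to reduce the problem to a single local estimate comparing distances on the sphere with distances between polar sets. First I would recall that the Pompeiu--Hausdorff distance between two closed sets $W_1, W_2 \subset S^n$ can be written as
\[
d_H(W_1, W_2) = \max\left\{\sup_{P \in W_1} d_S(P, W_2),\ \sup_{Q \in W_2} d_S(Q, W_1)\right\},
\]
where $d_S$ is the intrinsic (great-circle) metric on $S^n$. Since $\bigcirc_{{\rm Wulff},P}$ is an involution on $\mathcal{H}_{\rm Wulff}(S^n,P)$ (it is the restriction of $\bigcirc$, and $W^{\circ\circ} = W$ for spherical Wulff shapes, as follows from the cited results and the bijectivity statement in the excerpt), it suffices to prove one inequality, say $d_H(W_1^\circ, W_2^\circ) \le d_H(W_1, W_2)$; applying this to $W_1^\circ, W_2^\circ$ and using the involution then yields equality. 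So the whole theorem collapses to proving $\bigcirc_{{\rm Wulff},P}$ is $1$-Lipschitz.

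To prove the Lipschitz bound, I would exploit the symmetric ``support function'' description of the polar set. For a hemispherical spherical convex body $W$, a point $N \in S^n$ lies in $W^\circ$ if and only if $N \cdot Q \ge 0$ for all $Q \in W$, i.e. $W \subset H(N)$. The key geometric lemma I would isolate is: if $d_H(W_1, W_2) = \delta$, then every point $N_1 \in W_1^\circ$ can be ``rotated by at most $\delta$'' to a point $N_2 \in W_2^\circ$. Concretely, given $N_1 \in W_1^\circ$, so $W_1 \subset H(N_1)$; since every point of $W_2$ is within spherical distance $\delta$ of $W_1 \subset H(N_1)$, the set $W_2$ is contained in the $\delta$-neighborhood of $H(N_1)$, which is the (larger) closed spherical cap $\{Q : N_1 \cdot Q \ge -\sin\delta\}$. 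One then pushes $N_1$ slightly away from $W_2$: moving $N_1$ along the geodesic toward the ``deepest'' direction by angle exactly $\delta$ produces a point $N_2$ with $W_2 \subset H(N_2)$, hence $N_2 \in W_2^\circ$, and $d_S(N_1, N_2) \le \delta$. Making this ``push'' precise — showing that a single rotation by angle $\delta$ of $N_1$ simultaneously clears all of $W_2$ into a hemisphere — is the technical heart; here the hemisphericity of $W_1, W_2$ (guaranteed by membership in $\mathcal{H}_{\rm Wulff}(S^n,P)$, which keeps $W_i$ away from $H(-P)$) is what prevents degeneracies and keeps all relevant angles in $[0,\pi/2]$ where the trigonometric comparisons behave monotonically.

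Once the local lemma is in hand, the first inequality follows by taking suprema: $\sup_{N_1 \in W_1^\circ} d_S(N_1, W_2^\circ) \le \delta$, and symmetrically with the roles of $W_1, W_2$ exchanged, giving $d_H(W_1^\circ, W_2^\circ) \le \delta = d_H(W_1, W_2)$. Combined with the involution argument above, this proves part (1). For part (2), I would argue by continuity/density: $\mathcal{H}_{\rm Wulff}(S^n,P)$ is dense in $\overline{\mathcal{H}_{\rm Wulff}(S^n,P)}$ by definition, $\overline{\bigcirc_{{\rm Wulff},P}}$ is a continuous extension of $\bigcirc_{{\rm Wulff},P}$ (continuity is immediate once the isometry property on the dense subset is known — an isometry on a dense subset extends uniquely to an isometry on the completion of that subset inside the ambient complete metric space $\mathcal{H}(S^n)$), and the isometry identity $d_H(\overline{\bigcirc_{{\rm Wulff},P}}(W_1), \overline{\bigcirc_{{\rm Wulff},P}}(W_2)) = d_H(W_1,W_2)$ passes to the limit. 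One must check that the extension agrees with the already-defined $\overline{\bigcirc_{{\rm Wulff},P}} = \bigcirc|_{\overline{\mathcal{H}_{\rm Wulff}(S^n,P)}}$, i.e. that the polar transform $\bigcirc$ is itself continuous on this closed subspace; this again follows from the Lipschitz estimate applied to approximating sequences.

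The main obstacle I anticipate is the local geometric lemma: verifying rigorously that moving the normal direction $N_1$ by a single rotation of angle $d_H(W_1,W_2)$ carries all of $W_2$ into a closed hemisphere, with careful attention to which geodesic direction to move along and to the edge cases where $W_i$ touches the boundary of its containing hemisphere. Everything else — the reduction to one inequality via the involution, the supremum bookkeeping, and the density argument for the extension — is routine once that lemma is established.
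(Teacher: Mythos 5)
Your global architecture is exactly the paper's: prove the one-sided estimate $h(W_1^\circ,W_2^\circ)\le h(W_1,W_2)$, obtain the reverse inequality from the biduality $W^{\circ\circ}=W$, and pass to the closure by density and continuity of $\bigcirc$. The gap is in the one step that carries all the content, namely your ``local lemma.'' As justification for being able to move $N_1\in W_1^\circ$ by at most $\delta$ into $W_2^\circ$, you offer only that $W_2$ is contained in the enlarged cap $\{Q: N_1\cdot Q\ge -\sin\delta\}$ and that one can then ``push'' $N_1$ by a single rotation of angle $\delta$. That premise alone is insufficient: on $S^2$ with $N_1$ the north pole, the two-point set $\{(\cos\delta,0,-\sin\delta),\,(0,\cos\delta,-\sin\delta)\}$ lies in that cap, yet every $N_2$ with both points in $H(N_2)$ satisfies $\cos d(N_1,N_2)\le 1/\sqrt{1+2\tan^2\delta}<\cos\delta$, so no rotation by $\delta$ suffices. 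The statement is rescued only by the spherical convexity of $W_2$ (the geodesic joining those two points leaves the cap), and your sketch never uses convexity in the push step --- you invoke hemisphericity only to ``keep angles in $[0,\pi/2]$,'' which is not where the difficulty lies. Moreover, rotating $N_1$ toward the deepest point $Q^*$ of $W_2$ need not clear a second deep point $Q$ whose horizontal direction differs from that of $Q^*$; ruling this out for convex $W_2$ is itself a separation-type statement.

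The paper closes exactly this hole with its Lemma \ref{lemma 2}, the identity $\overline{B(\bigcap_{Q\in W}H(Q),r)}=\bigcap_{Q\in W}\overline{B(H(Q),r)}$, whose nontrivial inclusion is proved by the spherical separation theorem (Proposition \ref{proposition matousek}) combined with $W^{\circ\circ}=W$: if $\overline{B(N_1,\delta)}$ misses $W_2^\circ$, separate them by a hemisphere $H(R)$ and deduce $R\in W_2$ with $N_1\notin\overline{B(H(R),\delta)}$, contradicting $h(W_1,W_2)\le\delta$ via Lemma \ref{lemma 1}. If you replace your explicit-rotation argument by such a separation argument (or prove the neighborhood--intersection identity directly), the rest of your plan --- the involution reduction for part (1) and the limiting argument for part (2) --- goes through as you describe and coincides with the paper's proof.
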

\noindent 
For any positive real number $r$, 
let $D_r$ be the set consisting of $x\in \mathbb{R}^n$ satisfying 
$||x||\le r$. Then, $D_r$ is a Wulff shape 
for any $r\in \mathbb{R}$ $(r>0)$ and 
it is well-known that the dual Wulff shape of $D_r$ is $D_{\frac{1}{r}}$.   
Moreover, it is easily seen that 
$h(D_{r_1}, D_{r_2})=|r_1-r_2|$  holds 
for any $r_1, r_2\in \mathbb{R}$  ($r_1, r_2>0$), 
{\color{black}where $h$ is the Pompeiu-Hausdorff metric}.   
Thus, it is impossible to expect the Euclidean counterpart 
of the assertion (1) of Theorem \ref{theorem 1}.   
This shows an advantage of studying the spherical version of Wulff shapes.   
Moreover, the Euclidean counterpart of the extension 
of spherical dual transform 
relative to $P$ is not well-defined.   
This, too, shows an advantage of 
studying the spherical version of Wulff shapes.       
}
\par 
\medskip 
Next, we investigate the restriction of $\bigcirc$ to   
\[
\overline{\mathcal{H}_{\mbox{s-conv}}(S^n)}, 
\] 
which is the topological closure of 
the set consisting of spherical convex closed subsets. 
The restriction of $\bigcirc$  
to $\overline{\mathcal{H}_{\mbox{s-conv}}(S^n)}$ 
is 
denoted by 
$\bigcirc _{\mbox{\rm s-conv}}$.     
{\color{black} 
It is not hard to see the following 
(cf. Proposition \ref{lemma 1.1} in Subsection \ref{subsection 5.1}).   
\[
\bigcirc _{\mbox{\rm s-conv}}: 
\overline{\mathcal{H}_{\mbox{s-conv}}(S^n)} 
\to 
\overline{\mathcal{H}_{\mbox{s-conv}}(S^n)} 
\mbox{ is well-defined and bijective}. 
\]   
\begin{theorem}\label{theorem 2}
With respect to the Pompeiu-Hausdorff metric, 
the restriction of the spherical polar transform 
\[
\bigcirc _{\mbox{\rm s-conv}} : 
\overline{\mathcal{H}_{\mbox{{\rm s-conv}}}(S^{n})}
\rightarrow \overline{\mathcal{H}_{\mbox{{\rm s-conv}}}(S^{n})}
\] 
is bi-Lipschitz but never an isometry. 
\end{theorem}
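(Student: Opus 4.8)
The plan is to prove the two assertions of the theorem separately: for the bi-Lipschitz claim I will show that $\bigcirc_{\mbox{\rm s-conv}}$ is an isometry on a neighbourhood of the diagonal and control the rest by a compactness argument, while for the ``never an isometry'' claim it suffices to exhibit one explicit pair.

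\emph{The local estimate.} Let $d$ denote the geodesic distance on $S^{n}$, and for $W\in\overline{\mathcal{H}_{\mbox{s-conv}}(S^{n})}$ define the farthest-point function $f_{W}\colon S^{n}\to[0,\pi]$ by $f_{W}(R)=\max_{Q\in W}d(R,Q)$. Three facts are used. First, since $R\in H(Q)$ if and only if $d(R,Q)\le\pi/2$, we have $W^{\circ}=\{R\in S^{n}\mid f_{W}(R)\le\pi/2\}$. Second, $|f_{W_{1}}(R)-f_{W_{2}}(R)|\le h(W_{1},W_{2})$ for every $R$. Third, $W^{\circ\circ}=W$ on $\overline{\mathcal{H}_{\mbox{s-conv}}(S^{n})}$: indeed $W\subset W^{\circ\circ}$ by Lemma~2.2 of \cite{nishimurasakemi2}, combining this with the antitonicity of $\bigcirc$ gives $W^{\circ\circ\circ}=W^{\circ}$, and the injectivity of $\bigcirc_{\mbox{\rm s-conv}}$ (Proposition~\ref{lemma 1.1}) then forces $W^{\circ\circ}=W$. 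The key claim is: \emph{if $R\in S^{n}\setminus W^{\circ}$ and $d(R,W^{\circ})\le\pi/2$, then $d(R,W^{\circ})\le f_{W}(R)-\pi/2$.} To see it, let $R^{*}$ be a nearest point of $W^{\circ}$ to $R$, let $u$ be the unit tangent vector at $R^{*}$ pointing along the geodesic towards $R$, and let $m$ be the point reached from $R^{*}$ at distance $\pi/2$ in the direction $-u$. The minimality of $R^{*}$ gives $\langle u,v\rangle\le0$ for every tangent direction $v$ into the spherical convex set $W^{\circ}$, and an elementary computation then yields $W^{\circ}\subset H(m)$ and $d(R,m)=d(R,R^{*})+\pi/2$. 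By the third fact $m\in(W^{\circ})^{\circ}=W$, hence $f_{W}(R)\ge d(R,m)=d(R,R^{*})+\pi/2$, which is the claim. Consequently, if $h(W_{1}^{\circ},W_{2}^{\circ})\le\pi/2$ then for each $R\in W_{1}^{\circ}$ either $R\in W_{2}^{\circ}$, or $d(R,W_{2}^{\circ})\le f_{W_{2}}(R)-\pi/2\le f_{W_{1}}(R)+h(W_{1},W_{2})-\pi/2\le h(W_{1},W_{2})$; symmetrically for $R\in W_{2}^{\circ}$. Thus $h(W_{1}^{\circ},W_{2}^{\circ})\le h(W_{1},W_{2})$ whenever $h(W_{1}^{\circ},W_{2}^{\circ})\le\pi/2$.

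\emph{Globalization and bi-Lipschitz.} The hyperspace $\mathcal{H}(S^{n})$ is compact, so its closed subspace $\overline{\mathcal{H}_{\mbox{s-conv}}(S^{n})}$ is compact; since $\bigcirc_{\mbox{\rm s-conv}}$ is continuous there (upper semicontinuity of $W\mapsto W^{\circ}$ being immediate from the second fact above, while lower semicontinuity requires a short separate argument that approximates a point of $W^{\circ}$ by points of the polars of the approximating sets), it is uniformly continuous. Hence there is $\delta_{0}\in(0,\pi]$ with $h(W_{1},W_{2})<\delta_{0}\Rightarrow h(W_{1}^{\circ},W_{2}^{\circ})<\pi/2$, and for such pairs the local estimate gives $h(W_{1}^{\circ},W_{2}^{\circ})\le h(W_{1},W_{2})$; while for $h(W_{1},W_{2})\ge\delta_{0}$ one has trivially $h(W_{1}^{\circ},W_{2}^{\circ})\le\pi\le(\pi/\delta_{0})\,h(W_{1},W_{2})$. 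So $\bigcirc_{\mbox{\rm s-conv}}$ is Lipschitz with constant $L=\pi/\delta_{0}$, and being an involution by the third fact its inverse — itself — obeys the same bound; therefore $\tfrac{1}{L}\,h(W_{1},W_{2})\le h(W_{1}^{\circ},W_{2}^{\circ})\le L\,h(W_{1},W_{2})$ and $\bigcirc_{\mbox{\rm s-conv}}$ is bi-Lipschitz.

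\emph{Not an isometry, and the main obstacle.} For $P\in S^{n}$ and $0\le\rho\le\pi$ write $B_{\rho}(P)=\{Q\in S^{n}\mid d(P,Q)\le\rho\}$; a direct computation gives $B_{\rho}(P)^{\circ}=B_{\pi/2-\rho}(P)$ for $0\le\rho\le\pi/2$ and $h(B_{\rho}(P),B_{\rho}(Q))=\min\{d(P,Q),\pi-\rho\}$. Fix $P,Q\in S^{n}$ with $d(P,Q)=11\pi/12$ and set $W_{1}=B_{\pi/3}(P)$, $W_{2}=B_{\pi/3}(Q)$, which lie in $\mathcal{H}_{\mbox{s-conv}}(S^{n})\subset\overline{\mathcal{H}_{\mbox{s-conv}}(S^{n})}$. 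Then $h(W_{1},W_{2})=\min\{11\pi/12,2\pi/3\}=2\pi/3$, whereas $h(W_{1}^{\circ},W_{2}^{\circ})=h(B_{\pi/6}(P),B_{\pi/6}(Q))=\min\{11\pi/12,5\pi/6\}=5\pi/6\ne2\pi/3$, so $\bigcirc_{\mbox{\rm s-conv}}$ is not an isometry. The main obstacle is the globalization step: the local estimate only applies once one knows that the polars are already Pompeiu--Hausdorff-close (namely $h(W_{1}^{\circ},W_{2}^{\circ})\le\pi/2$), and this cannot be extracted from the naive inclusion $W_{1}^{\circ}\subset\{R\in S^{n}\mid f_{W_{2}}(R)\le\pi/2+h(W_{1},W_{2})\}$, because that superlevel set need not be Pompeiu--Hausdorff-close to $W_{2}^{\circ}$ when $W_{2}$ is a thin, nearly degenerate spherical convex set; this is exactly where the compactness of $\overline{\mathcal{H}_{\mbox{s-conv}}(S^{n})}$ and the continuity of $\bigcirc_{\mbox{\rm s-conv}}$ on the whole closure — including the lower-dimensional limiting sets — have to be used, and establishing that continuity is the point that requires the most care.
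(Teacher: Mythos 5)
Your counterexample to isometry is correct and complete: with $W_{i}=B_{\pi/3}(\cdot)$ centred at points $P,Q$ with $d(P,Q)=11\pi/12$, the identities $B_{\rho}(P)^{\circ}=B_{\pi/2-\rho}(P)$ and $h(B_{\rho}(P),B_{\rho}(Q))=\min\{d(P,Q),\pi-\rho\}$ do give $h(W_{1},W_{2})=2\pi/3\ne 5\pi/6=h(W_{1}^{\circ},W_{2}^{\circ})$. The paper instead reuses Claim 5.1 (two hemispheres $H(P_{1}),H(P_{2})$ with $|P_{1}P_{2}|$ close to $\pi$, whose polars are the singletons $\{P_{i}\}$); both examples work, and yours is self-contained. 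For the bi-Lipschitz half your route is genuinely different: the paper quotes Proposition \ref{proposition 1}, which gives the \emph{global} bound $h(W_{1}^{\circ},W_{2}^{\circ})\le 2\,h(W_{1},W_{2})$ on all of $\mathcal{H}^{\circ}(S^{n})$ (proved via Lemma \ref{lemma 2} and the separation theorem), applies it once more to the pair $W_{1}^{\circ},W_{2}^{\circ}$, and uses $W^{\circ\circ}=W$ (Lemma \ref{lemma 5}) to obtain the explicit two-sided bound $\tfrac12 h\le h^{\circ}\le 2h$ in three lines. You instead prove a sharper \emph{conditional} estimate ($h(W_{1}^{\circ},W_{2}^{\circ})\le h(W_{1},W_{2})$ whenever the polars are already within $\pi/2$, via a supporting-hemisphere argument at the nearest point, using $W^{\circ\circ}=W$ to place the pole of the supporting hemisphere back inside $W$) and then try to globalize by compactness of $\overline{\mathcal{H}_{\mbox{\rm s-conv}}(S^{n})}$ and uniform continuity; this yields only a non-explicit Lipschitz constant $\pi/\delta_{0}$.

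The gap is exactly where you locate it. Your globalization needs an a priori modulus of continuity for $\bigcirc$ on the whole closure --- some $\delta_{0}$ with $h(W_{1},W_{2})<\delta_{0}\Rightarrow h(W_{1}^{\circ},W_{2}^{\circ})<\pi/2$ --- and without it the conditional local estimate is vacuous (its hypothesis constrains the \emph{output} of the map, so it cannot bootstrap itself). You acknowledge that the lower semicontinuity ``requires a short separate argument'' but never supply it, so as written the Lipschitz half is incomplete. The missing ingredient is available: the continuity of $\bigcirc$ on $\mathcal{H}^{\circ}(S^{n})$ is cited from \cite{aperture} in the proof of Proposition \ref{proposition 1}, and indeed Proposition \ref{proposition 1} itself already gives the quantitative version with $\delta_{0}=\pi/4$. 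But invoking it makes your entire local-estimate machinery unnecessary for the theorem as stated, since the constant-$2$ bound applied to $\bigcirc$ and to $\bigcirc\circ\bigcirc=\mathrm{id}$ is all that is needed. (Your local estimate is of independent interest --- it shows the transform is non-expansive, in fact an isometry, on pairs whose polars stay within $\pi/2$ --- but to stand on its own it must be accompanied by a proof of the continuity it presupposes.)
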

\bigskip 
This paper is organized as follows. 
In Section \ref{section 2}, preliminaries for the proofs of Theorems 
\ref{theorem 1} and \ref{theorem 2} 
are given.       
Theorems \ref{theorem 1} and \ref{theorem 2} are proved in 
Sections \ref{section 3} and \ref{section 4} respectively.  
Section \ref{section 5} is an appendix where,   
for the sake of readers' convenience, 
it is proved that 
all of $\bigcirc_{{\rm Wulff},P}$, 
$\overline{\bigcirc_{{\rm Wulff},P}}$ and 
$\bigcirc _{\mbox{\rm s-conv}}$ 
are well-defined bijective mappings; and moreover it is explained why 
{\color{black}
the restriction of $\bigcirc$ to 
$\overline{\mathcal{H}_{\mbox{{\rm s-conv}}}(S^{n})}$ 
is important.}   
\section{Preliminaries}\label{section 2}
\subsection{Convex geometry in $S^n$}\label{convex in sphere}
\begin{definition}[\cite{nishimurasakemi2}]\label{definition 2.2}
{\rm Let $W$ be a hemispherical subset of $S^{\color{black}n}$.     
Then, the following set, denoted by 
$\mbox{\rm s-conv}({\color{black}W})$, is called the {\it spherical convex hull of} 
${\color{black}W}$.   
\[
\mbox{\rm s-conv}({\color{black}W})= 
\left\{\left.
\frac{\sum_{i=1}^k t_iP_i}{||\sum_{i=1}^kt_iP_i||}\;\right|\; 
P_i\in {\color{black}W},\; \sum_{i=1}^kt_i=1,\; t_i\ge 0, k\in \mathbb{N}
\right\}.
\] 
}
\end{definition}
{\color{black} \begin{lemma}[\cite{nishimurasakemi2}]\label{lemma 3}
Let $W_{1}, W_{2}$ be non-empty subsets of $S^{n}$.    
Suppose that the inclusion ${W_{1}\subset W_{2}}$ holds.    
Then, the inclusion $W_{2}^{\circ}\subset W_{1}^{\circ}$ holds.
\end{lemma}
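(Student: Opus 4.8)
The plan is to argue directly from the definition of the spherical polar set, exploiting the elementary fact that enlarging the index set of an intersection can only shrink the intersection. Recall that by definition
\[
W_1^{\circ} = \bigcap_{P\in W_1} H(P), \qquad W_2^{\circ} = \bigcap_{P\in W_2} H(P).
\]
First I would dispose of the degenerate case: if $W_2^{\circ}=\emptyset$ then the asserted inclusion $W_2^{\circ}\subset W_1^{\circ}$ holds vacuously, so from now on I may assume $W_2^{\circ}\ne\emptyset$ and fix an arbitrary point $Q\in W_2^{\circ}$.

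Next I would unwind what membership in $W_2^{\circ}$ means: $Q\in H(P)$ for every $P\in W_2$. Since the hypothesis gives $W_1\subset W_2$, every point $P\in W_1$ is in particular a point of $W_2$, and hence $Q\in H(P)$ for every $P\in W_1$ as well. Consequently $Q\in\bigcap_{P\in W_1}H(P)=W_1^{\circ}$. As $Q$ was an arbitrary element of $W_2^{\circ}$, this yields $W_2^{\circ}\subset W_1^{\circ}$, which is exactly the conclusion. Equivalently, the whole argument collapses to the one-line chain $W_2^{\circ}=\bigcap_{P\in W_2}H(P)\subset\bigcap_{P\in W_1}H(P)=W_1^{\circ}$, where the inclusion is immediate because the left-hand intersection ranges over a superset of the index set appearing on the right.

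There is no genuine obstacle here: the statement is precisely the order-reversing (antitone) property of the spherical polar operator, and the only point worth a comment is the empty-intersection convention used in the degenerate case. I would therefore expect the proof to occupy at most a couple of lines, and its role in the paper is purely as a technical tool feeding into the metric estimates for $\bigcirc_{{\rm Wulff},P}$ and $\bigcirc_{\mbox{\rm s-conv}}$.
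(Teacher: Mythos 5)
Your proof is correct: the inclusion $W_2^{\circ}=\bigcap_{P\in W_2}H(P)\subset\bigcap_{P\in W_1}H(P)=W_1^{\circ}$ is immediate from the definition once $W_1\subset W_2$, and your handling of the empty case is fine. The paper itself states this lemma without proof, importing it from \cite{nishimurasakemi2}, and your one-line argument is exactly the standard one that reference uses.
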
}
\begin{lemma}[\cite{nishimurasakemi2}]\label{lemma 4}
For any non-empty closed hemispherical subset $X \subset S^{n}$, 
{\color{black}
the equality $\mbox{ \rm s-conv}(X)= \left(
\mbox{ \rm s-conv}\left( X \right) \right)^{\circ\circ }$ holds.
} 
\end{lemma}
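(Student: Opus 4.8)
The plan is to identify the spherical bipolar operation $W\mapsto W^{\circ\circ}$ with the classical bipolar operation for convex cones in $\mathbb{R}^{n+1}$ and then invoke the Euclidean separation theorem. Write $Y=\mbox{s-conv}(X)$. One inclusion is free: $Y$ is a non-empty closed hemispherical subset of $S^n$ (hemisphericality being inherited from $X$, since the spherical convex hull of a set lying in an open hemisphere stays in that open hemisphere), hence $Y\in\mathcal{H}^\circ(S^n)$ because every hemispherical set $W$ satisfies $W^\circ\ne\emptyset$ (if $W\cap H(P)=\emptyset$ then $-P\in W^\circ$); so Lemma 2.2 of \cite{nishimurasakemi2} gives $Y\subset Y^{\circ\circ}$. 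Thus the entire content of the lemma is the reverse inclusion $Y^{\circ\circ}\subset Y$.

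Next I would lift everything to the ambient space $\mathbb{R}^{n+1}$. Let $\mbox{cone}(X)=\{\sum_{i=1}^{k}\lambda_iP_i\mid k\in\mathbb{N},\ \lambda_i\ge 0,\ P_i\in X\}$ be the convex conical hull of $X$. Since $X$ is closed in the compact space $S^n$ it is compact, and hemisphericality furnishes a unit vector $P_0$ together with a number $\varepsilon>0$ such that $P_0\cdot P\ge\varepsilon$ for every $P\in X$; in particular $0\notin\mbox{conv}(X)$, and from this one checks directly that $Y=\mbox{cone}(X)\cap S^n$ and that $\mbox{cone}(Y)=\mbox{cone}(X)$. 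The technical heart of the argument is the claim that \emph{$\mbox{cone}(X)$ is closed in $\mathbb{R}^{n+1}$}, which is the standard fact that the conical hull of a compact set whose convex hull misses the origin is closed: by Carath\'{e}odory's theorem for cones every element of $\mbox{cone}(X)$ can be written $\sum_{i=0}^{n}\lambda_iP_i$ with $\lambda_i\ge 0$ and $P_i\in X$, and the bound $\varepsilon\sum_i\lambda_i\le P_0\cdot(\sum_i\lambda_iP_i)$ keeps the coefficients of a convergent sequence bounded, so compactness of $X$ lets one extract a subsequence of the representing data converging to a representation of the limit. I expect this closedness statement to be the only step that is not pure bookkeeping.

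Granting it, the reverse inclusion follows by separation. Let $Q\in S^n\setminus Y$; then $Q\notin\mbox{cone}(X)$, which is a non-empty closed convex subset of $\mathbb{R}^{n+1}$, so the strict separation theorem yields $R'\in\mathbb{R}^{n+1}$ and $c\in\mathbb{R}$ with $R'\cdot x\le c<R'\cdot Q$ for all $x\in\mbox{cone}(X)$. Since $\mbox{cone}(X)$ is a cone containing $0$, letting the scale of $x$ tend to infinity forces $R'\cdot x\le 0$ on $\mbox{cone}(X)$ while $R'\cdot Q>0$; put $R=-R'/||R'||\in S^n$. Then $R\cdot P\ge 0$ for every $P\in Y$ (because $Y\subset\mbox{cone}(X)$), so $R\in Y^\circ$, whereas $R\cdot Q<0$, so $Q\notin H(R)$; since $H(R)\supset Y^{\circ\circ}$, this gives $Q\notin Y^{\circ\circ}$. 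Hence $Y^{\circ\circ}\subset Y$, which completes the proof. Apart from the closedness of $\mbox{cone}(X)$, everything used is formal: Definition \ref{definition 2.2}, the cited Lemma 2.2 of \cite{nishimurasakemi2}, and the classical separation theorem; alternatively one could first reduce $Y^{\circ\circ}$ to $X^{\circ\circ}$ using Lemma \ref{lemma 3} and the identity $W^{\circ\circ\circ}=W^\circ$, but this detour is not needed.
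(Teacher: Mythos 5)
Your argument is correct and complete. Note, however, that the paper does not prove this lemma at all: it is imported verbatim from \cite{nishimurasakemi2} and used as a black box, so there is no in-paper proof to compare yours against. What you have written is a self-contained justification via the standard bipolar machinery for convex cones: the identification $\mbox{s-conv}(X)=\mbox{cone}(X)\cap S^n$, the closedness of the conical hull of a compact set whose convex hull avoids the origin (correctly reduced to Carath\'eodory for cones plus the coefficient bound $\varepsilon\sum_i\lambda_i\le P_0\cdot x$ coming from hemisphericality), and strict Euclidean separation pushed to a homogeneous separating functional. Each step checks out, including the two small points one might worry about: that hemisphericality of $X$ passes to $\mbox{s-conv}(X)$ with a uniform gap $\varepsilon>0$ (by compactness of the closed set $X$), and that the easy inclusion $Y\subset Y^{\circ\circ}$ needs $Y\in\mathcal{H}^\circ(S^n)$, which you verify. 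This is essentially the same mechanism that underlies the paper's Proposition \ref{proposition matousek} (the spherical separation theorem deduced from the Euclidean one), so your proof is consistent in spirit with how the paper handles such statements; you have simply supplied the details that the paper delegates to the reference.
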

The following proposition may be regarded as a spherical version of 
the separation theorem, which may be easily obtained by 
the separation theorem in Euclidean space 
(for the separation theorem in Euclidean space, 
see for instance \cite{matousek}).   
\begin{proposition}\label{proposition matousek}
Let $P$ be a point of $S^n$ 
and let $W_1, W_2$ be {\color{black} closed} spherical convex sets 
such that $W_i\cap H(P)=\emptyset$ $(i=1,2)$.     
Suppose that $W_1\cap W_2=\emptyset$.   
Then, there exists a point $Q\in S^n$ satisfying the following:   
\[
W_1\subset H(Q)\quad \mbox{and}\quad W_2\cap H(Q)=\emptyset.   
\]         
\end{proposition}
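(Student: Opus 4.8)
The plan is to transport the problem to Euclidean space $\mathbb{R}^{n+1}$ by central projection and then invoke the ordinary separation theorem there. First I would choose coordinates so that $P$ is the north pole, i.e. $P=(0,\dots,0,1)$. Since $W_1\cap H(P)=\emptyset$ and $W_2\cap H(P)=\emptyset$, every point of $W_1\cup W_2$ has strictly negative last coordinate, so both sets lie in the open lower hemisphere and in particular avoid the equator. Consequently the central (gnomonic) projection $\pi$ onto the tangent hyperplane $\{x_{n+1}=-1\}$ at $-P$, given by $\pi(Q)=-Q/(Q\cdot P)$, is well-defined and a homeomorphism of the open lower hemisphere onto an affine copy of $\mathbb{R}^n$. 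The key geometric fact is that $\pi$ carries spherical arcs to line segments: if $Q_1,Q_2$ lie in the open lower hemisphere, then $\pi$ maps the arc $Q_1Q_2$ (as defined in Definition \ref{definition closure}) onto the segment joining $\pi(Q_1)$ and $\pi(Q_2)$. Hence $\widetilde{W_i}:=\pi(W_i)$ are convex subsets of $\mathbb{R}^n$ for $i=1,2$, they are closed (here one uses that $W_i$ is closed and contained in a compact subset of the open lower hemisphere, because $W_i\cap H(P)=\emptyset$ forces a uniform bound $Q\cdot P\le -\delta_i<0$), and $\widetilde{W_1}\cap\widetilde{W_2}=\emptyset$ since $\pi$ is injective.

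Next I would apply the separation theorem in $\mathbb{R}^n$ to the disjoint closed convex sets $\widetilde{W_1}$ and $\widetilde{W_2}$: there is an affine hyperplane $h$ in $\{x_{n+1}=-1\}\cong\mathbb{R}^n$ with $\widetilde{W_1}$ on one closed side of $h$ and $\widetilde{W_2}$ strictly on the other side. Now I pull this hyperplane back to the sphere. The affine hyperplane $h$ in $\{x_{n+1}=-1\}$, together with the origin of $\mathbb{R}^{n+1}$, spans a linear hyperplane $\Pi$ through $0$; let $Q\in S^n$ be a unit normal to $\Pi$, chosen so that $\widetilde{W_1}$ projects back into $\{X\in S^n : X\cdot Q\ge 0\}=H(Q)$. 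Because $\pi^{-1}$ maps the two open half-spaces cut out by $h$ in $\{x_{n+1}=-1\}$ onto the two sets $\{X\cdot Q>0\}$ and $\{X\cdot Q<0\}$ intersected with the open lower hemisphere, and maps $h$ itself into the great sphere $\{X\cdot Q=0\}$, the separation in $\mathbb{R}^n$ translates directly into
\[
W_1\subset H(Q)\quad\text{and}\quad W_2\cap H(Q)=\emptyset,
\]
which is exactly the assertion.

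The routine points — that $\pi$ is a well-defined homeomorphism of the open lower hemisphere, that it sends arcs to segments, and that it matches the half-space/hyperplane structure on the two sides — are all elementary computations with the formula $\pi(Q)=-Q/(Q\cdot P)$ and I would only sketch them. The main obstacle, and the step requiring genuine care, is the \emph{closedness} of the projected sets $\widetilde{W_i}$: central projection of a closed set need not be closed in general, and here it is precisely the hypothesis $W_i\cap H(P)=\emptyset$ (equivalently $W_i$ contained in a compact subset of the open hemisphere, away from the equator) that rescues it. Without this one could only separate in the weak sense, so I would make sure to state and use the uniform bound $Q\cdot P\le-\delta_i$ explicitly when invoking the Euclidean separation theorem for closed convex sets. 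A minor secondary point is the orientation/normalization choice of $Q$ (versus $-Q$), which is fixed by demanding $W_1\subset H(Q)$ rather than $W_2\subset H(Q)$.
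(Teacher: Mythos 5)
Your argument is correct and is precisely the route the paper itself indicates: the paper offers no written proof of Proposition \ref{proposition matousek}, remarking only that it ``may be easily obtained by the separation theorem in Euclidean space,'' and your gnomonic-projection argument (with the compactness of $W_i$, forced by $W_i\cap H(P)=\emptyset$, guaranteeing that the projected sets are compact convex and hence strictly separable) fills in exactly those details. No gaps.
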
 
\subsection{Metric geometry in $S^n$}\label{metric in sphere}
\quad { }
\par 
{\color{black}
For any $P, Q\in S^{n}$, 
the length of the arc $ PQ$ is denoted by $|PQ|$.
}
\begin{lemma}\label{lemma 1}
For any $P, Q\in S^{n}$ such that $|PQ|\leq \frac{\pi }{2}$, 
the following equality holds: 
\[
h(H(P),H(Q))=
|PQ|.
\]
\end{lemma}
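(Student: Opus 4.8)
\textbf{Proof plan for Lemma \ref{lemma 1}.}

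The plan is to compute the Pompeiu-Hausdorff distance $h(H(P),H(Q))$ directly from the definition, namely as
\[
h(H(P),H(Q))=\max\left\{\sup_{X\in H(P)}d(X,H(Q)),\ \sup_{Y\in H(Q)}d(Y,H(P))\right\},
\]
where $d$ denotes the geodesic distance on $S^n$. By the symmetry of the roles of $P$ and $Q$ it suffices to bound $\sup_{X\in H(P)}d(X,H(Q))$ and then exhibit one point where the bound $|PQ|$ is attained. First I would reduce to a two-dimensional picture: the relevant geometry takes place in the great circle through $P$ and $Q$ (together with the geodesic joining a point $X$ to its nearest point on $H(Q)$), so one may assume $n=1$ and place $P,Q$ on $S^1$ with angular coordinates, writing $\alpha=|PQ|\le\pi/2$.

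The key observation is that for a point $X\notin H(Q)$, the nearest point of $H(Q)$ to $X$ lies on the boundary great subsphere $\partial H(Q)=\{Z : Z\cdot Q=0\}$, and the distance $d(X,H(Q))$ equals $\frac{\pi}{2}-\angle(X,Q)$ when $\angle(X,Q)>\frac{\pi}{2}$, and is $0$ otherwise; here $\angle(\cdot,\cdot)$ is the angle between unit vectors. Equivalently $d(X,H(Q))=\max\{0,\ \arccos(X\cdot Q)-\frac{\pi}{2}\}$, which is an increasing function of $\arccos(X\cdot Q)$, i.e.\ of the geodesic distance from $X$ to $Q$. Hence $\sup_{X\in H(P)}d(X,H(Q))$ is governed by $\sup_{X\in H(P)}\angle(X,Q)$. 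Since $H(P)$ is the ball of radius $\frac{\pi}{2}$ about $P$ and $|PQ|=\alpha$, the triangle inequality on $S^n$ gives $\angle(X,Q)\le\frac{\pi}{2}+\alpha$ for all $X\in H(P)$, with equality for the point $X_0$ antipodal-ward from $Q$ along the great circle $PQ$ at distance $\frac{\pi}{2}$ from $P$ (this $X_0$ lies in $H(P)$ precisely because $\alpha\le\pi/2$ keeps $X_0$ on the correct side, and one checks $X_0\cdot P=\cos\frac{\pi}{2}=0\ge 0$). Therefore $d(X_0,H(Q))=(\frac{\pi}{2}+\alpha)-\frac{\pi}{2}=\alpha=|PQ|$, and no point does better, so $\sup_{X\in H(P)}d(X,H(Q))=|PQ|$. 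By symmetry the other supremum is also $|PQ|$, and the maximum of the two is $|PQ|$.

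The main obstacle I anticipate is not any single hard estimate but rather carefully justifying the formula $d(X,H(Q))=\max\{0,\arccos(X\cdot Q)-\frac{\pi}{2}\}$ and the claim that the worst point $X_0$ genuinely lies in $H(P)$ — both require keeping track of which side of the various boundary subspheres a point sits on, and this is exactly where the hypothesis $|PQ|\le\frac{\pi}{2}$ is used (if $\alpha>\frac{\pi}{2}$ the extremal configuration changes and the distance saturates). I would handle this by working in explicit coordinates on the great circle through $P$ and $Q$, parametrizing $X=(\cos\theta)P+(\sin\theta)P^{\perp}$ with $P^{\perp}$ the unit vector in that circle pointing toward $Q$, so that $X\in H(P)$ iff $\theta\in[-\frac{\pi}{2},\frac{\pi}{2}]$, computing $X\cdot Q=\cos(\theta-\alpha)$, and then maximizing $\arccos(\cos(\theta-\alpha))=|\theta-\alpha|$ over $\theta\in[-\frac{\pi}{2},\frac{\pi}{2}]$, whose maximum is $\frac{\pi}{2}+\alpha$ attained at $\theta=-\frac{\pi}{2}$; the general-$n$ case follows since for any $X\in H(P)\subset S^n$ one can only decrease $\angle(X,Q)$ by projecting onto this great circle while staying in $H(P)$.
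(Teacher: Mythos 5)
Your argument is correct, and it identifies the same extremal configuration that the paper relies on: the point of $H(P)$ farthest from $H(Q)$ sits on the great circle through $P$ and $Q$ at distance $\tfrac{\pi}{2}$ from $P$ on the side away from $Q$, giving $d(X_0,H(Q))=(\tfrac{\pi}{2}+|PQ|)-\tfrac{\pi}{2}=|PQ|$. The only difference is one of rigor: the paper's proof is a one-line appeal to Figure 1, whereas you explicitly justify the formula $d(X,H(Q))=\max\{0,\arccos(X\cdot Q)-\tfrac{\pi}{2}\}$ and verify via the triangle inequality and the hypothesis $|PQ|\le\tfrac{\pi}{2}$ that the extremal point actually lies in $H(P)$, which is exactly the detail the figure sweeps under the rug.
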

\begin{proof}\quad 
{\color{black}By} Figure 1, 
it is clear that $h(H(P),H(Q))+r=\mid PQ\mid+\ r=\frac{\pi }{2}$, 
so we have $h(H(P),H(Q))=
\mid PQ\mid${\color{black}.}  
\end{proof}
\begin{figure}[hbtp]
\begin{center}
\includegraphics[width=4cm]{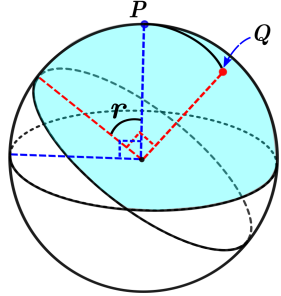}
\caption{$h(H(P),H(Q))= |PQ|$.}
\label{figure 1}
\end{center}
\end{figure}   
\begin{definition}\label{definition 2.2}
{\rm 
\begin{enumerate}
\item 
For any point $P\in S^n$ and any real number $r$ satisfying $0<r<\pi$, 
define the following two sets:
\begin{eqnarray*}    
\overline{B(P, r)} & = & \{Q\in S^{n}\; |\; |PQ|\leq r\},  \\ 
\partial \overline{B(P,r)} & = & {\{Q\in S^{n}\mid |PQ|=r\}}.
\end{eqnarray*}
{\color{black}
\item 
For any non-empty subset $W\subset S^n$ and 
any real number $r$ satisfying $0<r<\pi$, define the following two sets:
\[    
\overline{B(W, r)}  =  \bigcup_{P\in W}\overline{B(P, r)}.  
\]
}
\end{enumerate}
}
\end{definition}
\begin{lemma}\label{lemma 2}
For any subset $W\in S^{n}$ such that $W^{\circ}$ is spherical convex set 
and any real number $r$ satisfying $0<r<\frac{\pi}{2}$, 
the following equality holds:
\[
\overline{B\left(\displaystyle\bigcap_{P\in W}H(P),r\right)}=\displaystyle\bigcap_{P\in W}\overline{B(H(P), r)}.
\]
\end{lemma}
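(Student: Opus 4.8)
The plan is to prove the two inclusions of $\overline{B(W^{\circ},r)}=\bigcap_{P\in W}\overline{B(H(P),r)}$ separately, using that $\bigcap_{P\in W}H(P)=W^{\circ}$. The inclusion ``$\subseteq$'' is immediate and uses no convexity: if $X\in\overline{B(W^{\circ},r)}$, choose $A\in W^{\circ}$ with $|XA|\le r$; by Lemma \ref{lemma 3} we have $W^{\circ}\subseteq H(P)$ for every $P\in W$, so $A\in H(P)$, hence $\mathrm{dist}(X,H(P))\le|XA|\le r$, i.e. $X\in\overline{B(H(P),r)}$ for every $P\in W$. The content of the lemma is therefore the reverse inclusion, which I would prove by contradiction.

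First I record a reformulation: $X\in\bigcap_{P\in W}\overline{B(H(P),r)}$ says $\mathrm{dist}(X,H(P))\le r$ for every $P\in W$, which by an elementary spherical computation (in the spirit of Lemma \ref{lemma 1}) is equivalent to $|PX|\le\frac{\pi}{2}+r$ for every $P\in W$. Now assume such an $X$ satisfies $\mathrm{dist}(X,W^{\circ})>r$; I will exhibit a $P\in W$ with $|PX|>\frac{\pi}{2}+r$, a contradiction. Since $W^{\circ}$ is non-empty and compact, fix $A\in W^{\circ}$ with $d:=\mathrm{dist}(X,W^{\circ})=|XA|>r$. The one structural fact I lean on is $W=W^{\circ\circ}$: as $W$ is a spherical convex body it is closed and hemispherical with $\mbox{\rm s-conv}(W)=W$, so Lemma \ref{lemma 4} gives $W=\mbox{\rm s-conv}(W)=(\mbox{\rm s-conv}(W))^{\circ\circ}=W^{\circ\circ}$. (This is where convexity is genuinely used, and it is convexity of $W$, not merely of $W^{\circ}$, that matters.)

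Split on the size of $d$. If $d>\frac{\pi}{2}$, then every $Y\in W^{\circ}$ has $|XY|\ge d>\frac{\pi}{2}$, so $X\cdot Y<0$, i.e. $W^{\circ}\subseteq H(-X)$; by Lemma \ref{lemma 3} together with $H(-X)^{\circ}=\{-X\}$ this gives $-X\in W^{\circ\circ}=W$, and $P:=-X$ has $|PX|=\pi>\frac{\pi}{2}+r$. If $d\le\frac{\pi}{2}$, invoke the obtuse-angle property of the nearest-point projection onto the spherical convex set $W^{\circ}$: writing $u$ for the unit tangent at $A$ pointing toward $X$, a short computation (differentiating $t\mapsto-X\cdot\gamma(t)$ along the geodesics $\gamma$ from $A$ into $W^{\circ}$, which are unique because $W^{\circ}$ lies in an open hemisphere) yields $u\cdot Y\le0$ for all $Y\in W^{\circ}$; equivalently, with $Q:=-u\in S^{n}$, one has $W^{\circ}\subseteq H(Q)$ and $Q\cdot X=-\sin d$. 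By Lemma \ref{lemma 3} with $H(Q)^{\circ}=\{Q\}$ we get $Q\in W^{\circ\circ}=W$, and since $0<r<d\le\frac{\pi}{2}$ we have $\sin d>\sin r$, hence $Q\cdot X=-\sin d<-\sin r=\cos(\frac{\pi}{2}+r)$, i.e. $|QX|>\frac{\pi}{2}+r$. In both cases we reach the contradiction, so $\mathrm{dist}(X,W^{\circ})\le r$ and the reverse inclusion holds.

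The main obstacle is this reverse inclusion, and within it the step that turns a supporting-hemisphere statement for $W^{\circ}$ into a membership statement in $W$: Lemma \ref{lemma 3} only places the separating pole in $W^{\circ\circ}$, so the identity $W=W^{\circ\circ}$ is doing the essential work. It is also the reason an abstract appeal to the separation Proposition \ref{proposition matousek} would not help directly --- that would require $W^{\circ}$ and a ball about $X$ to avoid a common hemisphere, which need not happen --- so the nearest-point/obtuse-angle argument is the right tool. The supporting-hemisphere computation itself is routine once the geodesics are parametrised, but it should be written carefully since it silently uses that $W^{\circ}$ is contained in an open hemisphere.
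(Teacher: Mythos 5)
Your treatment of the easy inclusion matches the paper's. For the hard inclusion your route is genuinely different: the paper assumes a point $Q\in\bigcap_{P\in W}\overline{B(H(P),r)}$ lies outside $\overline{B(W^{\circ},r)}$, so that $\overline{B(Q,r)}\cap W^{\circ}=\emptyset$, and invokes the separation result (Proposition \ref{proposition matousek}) to produce a hemisphere $H(P)$ containing $W^{\circ}$ and missing $\overline{B(Q,r)}$, whence $P\in W^{\circ\circ}$; you instead build the supporting hemisphere by hand from the nearest-point projection onto $W^{\circ}$ (the obtuse-angle inequality $u\cdot Y\le 0$), which gives the sharper quantitative conclusion $|QX|=\frac{\pi}{2}+d>\frac{\pi}{2}+r$. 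Your computation is correct, and your remark about the separation proposition is a fair criticism of the paper's own argument: as stated, Proposition \ref{proposition matousek} requires the two convex sets to avoid a \emph{common} closed hemisphere, and the paper applies it to $W^{\circ}$ and $\overline{B(Q,r)}$ without checking this; your first-variation argument sidesteps that issue entirely.

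The one genuine problem is the step $W^{\circ\circ}=W$, and here you and the paper stumble in exactly the same place. You justify it by declaring that ``$W$ is a spherical convex body,'' but that is not a hypothesis of Lemma \ref{lemma 2}: the lemma assumes only that $W\subset S^{n}$ is a set whose polar $W^{\circ}$ is spherical convex, and Lemma \ref{lemma 4} then yields only $W^{\circ\circ}=\mbox{\rm s-conv}(W)\supseteq W$, and that only when $W$ is closed and hemispherical. (The paper's proof writes $P\in W^{\circ\circ}=W$ with the same lack of justification.) The identity is genuinely needed: for $n=1$, $r=\frac{\pi}{3}$ and $W=\{P_{1},P_{2}\}$ with $|P_{1}P_{2}|=\frac{2\pi}{3}$, the polar $W^{\circ}$ is a spherical convex arc of length $\frac{\pi}{3}$, yet the antipode $X$ of its midpoint satisfies $|XP_{i}|=\frac{2\pi}{3}\le\frac{\pi}{2}+r$, so $X\in\bigcap_{P\in W}\overline{B(H(P),r)}$, while $\mathrm{dist}(X,W^{\circ})=\frac{5\pi}{6}>r$, so $X\notin\overline{B(W^{\circ},r)}$. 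Hence the claimed equality is false for general $W$ with $W^{\circ}$ convex, and both your proof and the paper's are really proofs of the corrected statement in which one additionally assumes $W=W^{\circ\circ}$ (as does hold for the spherical convex bodies to which Theorem \ref{theorem 1} applies the lemma). You correctly identified $W=W^{\circ\circ}$ as the load-bearing step; it should be flagged as an added hypothesis rather than derived from a convexity of $W$ that the statement does not grant.
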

\par
\begin{proof} \indent 
''$\subset$ '' \indent
Let $Q$ be a point of $\overline{B\left(\bigcap_{P\in W}H(P),r\right)}$. 
Then, it follows that 
$\overline{B(Q, r)}\cap \bigcap_{P\in W}H(P)\neq \emptyset$. 
Thus, there exists a point $ Q_{1}\in \overline{B(Q, r)}$ 
such that $Q_{1}$ belongs to $\bigcap_{P\in W}H(P)$.    
Therefore, there exists a point $Q_{1}\in \overline{B(Q, r)}$ 
such that $Q_{1}\in H(P) $ for any $P\in W$.    
It follows that $Q\in \bigcap_{P\in W}\overline{B(H(P), r)}.$\\
{\color{black} ''$\supset$'' 
\indent 
Suppose that 
there exists a point $Q \in \bigcap_{P\in W}\overline{B(H(P), r)}$ 
such that $Q\notin \overline{B(\bigcap_{P\in W}H(P), r)}$.    
Since $Q$ belongs to $\bigcap_{P\in W} \overline{B(H(P), r)}$, 
we have that $Q\in \overline{B(H(P), r)}$ for any $P\in W$.    
On the other hand, since $Q$ does not belongs to 
$\overline{B(\bigcap_{P\in W}H(P), r)}=\overline{B(W^{\circ}, r)}$,  
it follows that  $\overline{B(Q, r)}\cap W^{\circ}= \emptyset$.    
Since $W^{\circ}$ and $\overline{B(Q, r)}$ are closed spherical convex sets, 
by Proposition \ref{proposition matousek}, 
there exists a point $P\in S^{n}$ such that $W^{\circ}\subset H(P)$ and 
$\overline{B(Q, r)}\cap H(P)=\emptyset$.    
By Lemmas \ref{lemma 3} and \ref{lemma 4}, 
it follows that $P\in W^{\circ \circ}=W$.    
Therefore, there exists a point $P\in W$ 
such that $Q\notin \overline{B(H(P), r)}.$    
This contradicts 
the assumption $Q \in \bigcap_{P\in W}\overline{B(H(P), r)}$. 
}
\end{proof}
\subsection{Pompeiu-Hausdorff metric}\label{pompeiu-hausdorff}
\par 
\begin{definition}[\cite{barnsley}]\label{Pompeiu-Hausdorff}
{\rm Let $(X, d)$ be a complete metric space.     
\begin{enumerate}
\item Let $x$ {\color{black}(resp., $B$)} be a point of 
$X$ {\color{black}(resp., a non-empty compact subset 
of $X$)}.     
Define 
\[
d(x,B)=\min\{d(x,y)\; |\; y\in B\}.
\] 
Then, $d(x,B)$ is called the {\it distance from the point $x$ to the set $B$}.   
\item Let $A, B$ be two non-empty compact subsets of $X$.   
Define 
\[
d(A,B)=\max \{d(x, B)\; |\; x\in A\}.   
\] 
Then, $d(A,B)$ is called the {\it distance from the set $A$ to the set $B$}.  
\item Let $A, B$ be two non-empty compact subsets of $X$.   
Define 
\[
h(A,B)=\max \{d(A, B), d(B, A)\}.   
\] 
Then, $h(A,B)$ is called the {\it Pompeiu-Hausdorff distance between $A$ and $B$}.  
\end{enumerate}
}
\end{definition}
\par 
Let $A, B$ be two non-empty compact subsets 
of a complete metric space $(X,d)$.    
The Pompeiu-Hausdorff distance between $A$ and $B$ naturally induces the 
{\it Pompeiu-Hausdorff metric} 
$h: \mathcal{H}(X)\times \mathcal{H}(X)\to \mathbb{R}_+\cup \{0\}$, 
where $\mathcal{H}(X)$ is the set consisting of non-empty 
compact sets of $X$ and $\mathbb{R}_+$ is the set of positive 
real numbers.   
The set $\mathcal{H}(X)$  
is a metric space with respect to the Pompeiu-Hausdorff metric.    
It is well-known that the metric space 
$(\mathcal{H}(X), h)$ is complete.    For details on $(\mathcal{H}(X), h)$, see 
for example \cite{barnsley, falconer}.       
\subsection{Lipschitz mappings}
{\color{black}
\begin{definition}\label{Lipschitz}
{\rm 
Let $(X, d_X), (Y,d_Y) $ be metric spaces.   
A mapping $f: X\to Y$ is said to be {\it Lipschitz} if 
there exists a positive real number $K\in \mathbb{R}$ such that 
the following holds for any $x_1, x_2\in X$:  
\[
d_Y(f(x_1), f(x_2))\le K d_X(x_1, x_2).   
\]    
The positive real number $K\in \mathbb{R}$ for a Lipschitz 
mapping is called the {\it Lipschitz coefficient} of $f$.    
}
\end{definition}
}
{\color{black}
\begin{definition}\label{bi-Lipschitz isometry}
{\rm 
Let $(X, d_X), (Y, d_{\color{black}Y})$ be metric spaces.
\begin{enumerate}   
\item 
A mapping $f: X\to Y$ is said to be {\it bi-Lipschitz} if 
$f$ is bijective and there exist positive real numbers $K, L\in \mathbb{R}$ 
such that 
the following hold for any $x_1, x_2\in X$ and any $y_1, y_2\in Y$:  
\begin{eqnarray*}
d_Y(f(x_1), f(x_2)) & \le & K d_X(x_1, x_2), \\ 
d_X(f^{-1}(y_1), f^{-1}(y_2)) & \le & L d_Y(y_1, y_2),   
\end{eqnarray*}    
\item 
A mapping $f: X\to Y$ is {\color{black}called} an {\it isometry} if 
$f$ is bijective and the following holds for any $x_1, x_2\in X$:  
\[
d_Y(f(x_1), f(x_2)) = d_X(x_1, x_2). 
\]
\end{enumerate}
}
\end{definition}
}
\begin{proposition}\label{proposition 1} 
For any $n\in \mathbb{N}$, 
the spherical polar transform 
$\bigcirc: \mathcal{H}^\circ (S^n)\to \mathcal{H}^\circ(S^n)$ 
is Lipschitz with respect to the Pompeiu-Hausdorff metric.    
\end{proposition}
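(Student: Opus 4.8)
The plan is to show that for any two sets $W_1, W_2 \in \mathcal{H}^\circ(S^n)$ we have $h(W_1^\circ, W_2^\circ) \le K\, h(W_1, W_2)$ for a universal constant $K$ (in fact I expect $K = 1$ will work on the range of radii that matter, but even showing Lipschitz with some $K$ suffices here). The key geometric input is Lemma \ref{lemma 1}, which tells us that on nearby points the hemisphere map $P \mapsto H(P)$ is distance-preserving: $h(H(P), H(Q)) = |PQ|$ whenever $|PQ| \le \frac{\pi}{2}$. Combined with the defining formula $W^\circ = \bigcap_{P \in W} H(P)$ and Lemma \ref{lemma 2}, which says $\overline{B(\bigcap_{P\in W}H(P), r)} = \bigcap_{P \in W}\overline{B(H(P), r)}$, this should let me translate an inclusion of one polar set inside an $r$-neighborhood of the other into a statement about hemispheres.

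\textbf{Main steps.} First I would unwind the Pompeiu-Hausdorff metric: $h(W_1, W_2) \le r$ is equivalent to $W_1 \subset \overline{B(W_2, r)}$ and $W_2 \subset \overline{B(W_1, r)}$, i.e. each set lies in the closed $r$-neighborhood of the other. So fix $r = h(W_1, W_2)$ (we may assume $r < \frac{\pi}{2}$, handling the trivial or large-$r$ cases separately with the crude bound $h \le \pi$). From $W_1 \subset \overline{B(W_2, r)}$ I want to deduce $W_2^\circ \subset \overline{B(W_1^\circ, r)}$, and symmetrically; together these give $h(W_1^\circ, W_2^\circ) \le r$. Second, the heart of the argument: take $P \in W_1$; then $P \in \overline{B(Q, r)}$ for some $Q \in W_2$, so by Lemma \ref{lemma 1} (after checking $|PQ| = r \le \frac{\pi}{2}$) we have $h(H(P), H(Q)) = |PQ| \le r$, hence $H(Q) \subset \overline{B(H(P), r)}$. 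Intersecting over all $P \in W_1$ and over the associated $Q$'s — and using monotonicity of polar sets, Lemma \ref{lemma 3}, together with Lemma \ref{lemma 2} to pull the neighborhood outside the intersection — I should arrive at $W_2^\circ = \bigcap_{Q \in W_2} H(Q) \subset \bigcap_{P \in W_1} \overline{B(H(P), r)} = \overline{B(\bigcap_{P \in W_1} H(P), r)} = \overline{B(W_1^\circ, r)}$. Third, run the symmetric inclusion and combine.

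\textbf{Main obstacle.} The delicate point is the set-theoretic bookkeeping in the intersection step: given $P \in W_1$ we only know $H(Q) \subset \overline{B(H(P), r)}$ for \emph{one} particular $Q = Q(P) \in W_2$, whereas $W_2^\circ$ is the intersection over \emph{all} $Q \in W_2$; so one has to be careful that $W_2^\circ = \bigcap_{Q \in W_2} H(Q) \subset \bigcap_{P \in W_1} H(Q(P)) \subset \bigcap_{P \in W_1}\overline{B(H(P), r)}$, where the first inclusion is legitimate because $\{Q(P) : P \in W_1\} \subseteq W_2$. A second subtlety is verifying the hypothesis of Lemma \ref{lemma 2}, namely that $W_1^\circ$ (resp.\ $W_2^\circ$) is a spherical convex set: this follows since $W^\circ$ is always an intersection of hemispheres and hence spherical convex, but it should be stated. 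Finally one must confirm that $W^\circ$ is compact (closed in $S^n$, hence compact) so that $h$ is actually defined on it, which is immediate from the definition of $\mathcal{H}^\circ(S^n)$. I would present the argument cleanly as: reduce to showing one inclusion, apply Lemmas \ref{lemma 1}, \ref{lemma 2}, \ref{lemma 3} in that order, and conclude.
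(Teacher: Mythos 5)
Your plan reproduces the core of the paper's argument (the paper runs it by contradiction, you run it as a direct inclusion, but the chain Lemma \ref{lemma 1} $\to$ Lemma \ref{lemma 2} $\to$ the index-set bookkeeping is identical), and your handling of the "one $Q(P)$ per $P$" subtlety is exactly right. However, there is one genuine gap, and it is precisely the point where you wave your hands: you assert that the hypothesis of Lemma \ref{lemma 2} is automatic because "$W^\circ$ is always an intersection of hemispheres and hence spherical convex." In this paper's framework that is false. By Definition \ref{definition closure}, spherical convexity presupposes that the set is \emph{hemispherical}, and an intersection of closed hemispheres need not be: for $W=\{P\}$ one gets $W^\circ=H(P)$, and for $W=\{P_1,P_2\}\subset S^2$ one gets a lune containing the antipodal pair $\pm V$, $V=P_1\times P_2/\|P_1\times P_2\|$. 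Worse, Lemma \ref{lemma 2} actually \emph{fails} without the hypothesis: for the lune $L=H(P_1)\cap H(P_2)$ with $|P_1P_2|=\theta$, the point $Q=-\frac{P_1+P_2}{\|P_1+P_2\|}$ satisfies $d(Q,H(P_i))=\frac{\pi}{2}-\frac{\theta}{2}$ for $i=1,2$ but $d(Q,L)=\frac{\pi}{2}$ (every $X\in L$ has $X\cdot(P_1+P_2)\ge 0$), so for $\frac{\pi}{2}-\frac{\theta}{2}\le r<\frac{\pi}{2}$ the inclusion $\bigcap_i\overline{B(H(P_i),r)}\subset\overline{B(L,r)}$ breaks down, and $r$ here can be arbitrarily small when $\theta$ is close to $\pi$. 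So your argument only establishes the bound on the subclass of $W$ whose polars are spherical convex; the paper devotes the entire second half of its proof to removing that restriction, by approximating a general $W\subset H(P)$ with $W_i=\overline{B(W,\frac{1}{i})}\cap\overline{B(H(P),\frac{\pi}{2}-\frac{1}{i})}$ (whose polars \emph{are} spherical convex), applying the convex case to the $W_i$, and passing to the limit using the continuity of $\bigcirc$ established in the aperture paper. Your proof needs this (or some substitute) to be complete.

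A secondary point: the constant $K=1$ you hope for cannot work globally. The paper's Claim \ref{claim 1} exhibits $W_i=H(P_i)$ with $h(W_1,W_2)\le\frac{\pi}{2}$ while $h(W_1^\circ,W_2^\circ)=|P_1P_2|$ can be arbitrarily close to $\pi$, so $2$ is the optimal Lipschitz constant on $\mathcal{H}^\circ(S^n)$. Your fallback "crude bound $h\le\pi$ for large $r$" is consistent with $K=2$ but not with $K=1$; the clean $K=1$ estimate is recovered only later, in the proof of Theorem \ref{theorem 1}, on the smaller space $\mathcal{H}_{\rm Wulff}(S^n,P)$ where $h<\frac{\pi}{2}$ holds automatically and the polars are genuinely spherical convex.
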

\begin{proof}
We first show Proposition \ref{proposition 1} 
under the assumption that  
$W^{\circ}$ is a spherical convex set. 
Suppose that $\bigcirc$ is not Lipschitz. Then, for any $K>0$ there exist $W_{1}, W_{2}\in \mathcal{H}^{\circ}(S^{n})$ such that $Kh(W_1,W_2)<h(W_{1}^{\circ },W_{2}^{\circ })$.     
In particular, for $K=2$ there exist $W_{1}, W_{2}\in \mathcal{H}^{\circ}(S^{n})$ such that $2h(W_1,W_2)<h(W_{1}^{\circ },W_{2}^{\circ }).$ Since $h(X, Y)\leq \pi$ for any $X, Y\in \mathcal{H}^{\circ}(S^{n})$, {\color{black} i}t follows that $h(W_1,W_2)<\frac{\pi }{2}$.    
Set $r=h(W_1,W_2).$ Then, since $2r=2h(W_1,W_2)<h(W_{1}^{\circ },W_{2}^{\circ }),$ by Definition \ref{Pompeiu-Hausdorff}, it follows that at least one of $d(W_{1}^{\circ },W_{2}^{\circ })>2r$ and $d(W_{2}^{\circ },W_{1}^{\circ })>2r$ holds. Therefore, at least one of the following two holds.\vspace{2mm}\\
(1) There exists a point $P\in W_{1}^{\circ }$ such that $d(P,Q)>2r$ for any $Q\in W_{2}^{\circ }.$\vspace{2mm}\\
(2) There exists a point $Q\in W_{2}^{\circ }$ such that $d(Q,P)>2r$ for any $P\in W_{1}^{\circ }.$\vspace{2mm}\\
We show that (1) implies a contradiction. Suppose that there exists a point $\widetilde{P}\in W_{1}^{\circ}$ such that $\widetilde{P}\notin \overline{B(W_{2}^{\circ}, 2r)}$. 
In particular, $\widetilde{P}$ does not belong to $\overline{B(W_{2}^{\circ}, r)}$. 
{\color{black} Notice that the assumption that $W_{2}^{\circ}$ is a spherical convex set. Notice furthermore that $r$ is less than $\frac{\pi}{2}$. 
Thus by Lemma \ref{lemma 2},} 
we have the following: 
\[
\widetilde{P} \notin \overline{B(W_{2}^{\circ}, r)}
=\overline{B\left(\bigcap_{Q\in W_{2}}H(Q), r\right)}
=\bigcap_{Q\in W_{2}}\overline{B(H(Q), r)}. 
\]
Hence, there exists a point $Q \in W_{2}$ 
such that $\widetilde{P}\notin \overline{B(H(Q), r)}$.\\
\indent On the other hand, since $h(W_{1}, W_{2})=r$, 
there exists a point $P_{Q}\in W_{1}$ such that $d(P_{Q}, Q)\leq r$. 
Thus, by Lemma \ref{lemma 1}, 
it follows that 
$\widetilde{P}\in H(P_{Q})\subset \overline{B(H(Q), r)}.$    
Therefore, we have a contradiction.\\
\indent In the same way, we can show that (b) implies a contradiction. \\
\indent
Next we show that for any closed 
$W, \widetilde{W}\in \mathcal{H}^{\circ}(S^{n})$ such that at least one of 
$W^{\circ}, \widetilde{W}^{\circ}$ {\color{black} is not spherical} convex, 
{\color{black} the inequality }
$h(W^{\circ}, \widetilde{W}^{\circ})\leq 2h(W, \widetilde{W})$ holds.    
Since $W, \widetilde{W}\in \mathcal{H}^{\circ}(S^{n})$, there exists 
$P, \widetilde{P}\in S^{n}$ such that 
$W\subset H(P), \widetilde{W}\subset H(\widetilde{P})$. 
{\color{black} Set} $W_{i}
=
\overline{B(W, \frac{1}{i})}
\cap 
\overline{B(H(P), \frac{\pi}{2}-\frac{1}{i})},\  \widetilde{W}_{i}
=
\overline{B(\widetilde{W}, \frac{1}{i})}
\cap 
\overline{B(H(\widetilde{P}), \frac{\pi}{2}-\frac{1}{i})}$ for any $i\in \mathbb{N}$.
Since {\color{black} both} $W_{i}^{\circ}, \widetilde{W}_{i}^{\circ}$ 
are {\color{black} spherical convex, by the proof given above}, 
we have that 
$h(W_{i}^{\circ}, \widetilde{W}_{i}^{\circ})\leq {\color{black} 2}h(W_{i}, \widetilde{W}_{i})$ 
 for any $i\in \mathbb{N}$. 
{\color{black} Notice that} 
$W=\lim_{i\to \infty}W_{i},\ \widetilde{W}=\lim_{i\to \infty}\widetilde{W}_{i}$. 
Therefore, for any $i\in \mathbb{N}$, it follows that
\begin{eqnarray*}
h(W^{\circ}, \widetilde{W}^{\circ}) 
&\leq & 
h(W^{\circ}, W_{i}^{\circ})+h(W_{i}^{\circ}, 
 \widetilde{W}_{i}^{\circ})+h(\widetilde{W}_{i}^{\circ}, \widetilde{W}^{\circ})\\
&\leq & 
h(W^{\circ}, W_{i}^{\circ})+2h(W_{i}, \widetilde{W}_{i})
+
h(\widetilde{W}_{i}^{\circ}, \widetilde{W}^{\circ}).
\end{eqnarray*}
In \cite{aperture}, it has been shown that 
$\bigcirc: \mathcal{H}^\circ(S^2)\to \mathcal{H}^\circ(S^2)$ is continuous.   
It is easily seen that the proof of this result given in \cite{aperture} 
works well for general $n\in \mathbb{N}$.   
Thus, we have that
$\lim_{i\to \infty}h(W^{\circ}, W_{i}^{\circ})=0$, 
$\lim_{i\to \infty}h(\widetilde{W}^{\circ}, \widetilde{W}_{i}^{\circ})=0$.    
Therefore, we have the following:
\[
h(W^{\circ}, \widetilde{W}^{\circ})
\leq 
2\lim_{i\to \infty}h(W_{i}, \widetilde{W}_{i})=2h(W, \widetilde{W}).
\]
\noindent
Thus, the spherical polar transform 
$\bigcirc: \mathcal{H}^\circ (S^n)\to \mathcal{H}^\circ(S^n)$ 
is Lipschitz. 
\end{proof}
\par
\begin{claim} \label{claim 1}
The following example {\color{black} shows} that the natural number \lq \lq \ $2$ \rq \rq is the least real number for the Lipschitz coefficient 
{\color{black}of $\bigcirc$}.
\end{claim}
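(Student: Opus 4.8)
The plan is to exhibit an explicit one-parameter family of pairs $(W_t, \widetilde W_t)$ in $\mathcal{H}^\circ(S^n)$ for which the ratio
\[
\frac{h(W_t^\circ, \widetilde W_t^\circ)}{h(W_t, \widetilde W_t)}
\]
tends to $2$ as $t\to 0$. Combined with Proposition \ref{proposition 1}, this shows that $2$ is exactly the infimum of admissible Lipschitz coefficients, and that this infimum is not attained in general (which is the content the claim is gesturing at). First I would pick the simplest possible sets: single points. For $P\in S^n$, the polar set $\{P\}^\circ$ is the hemisphere $H(P)$. By Lemma \ref{lemma 1}, for $P,Q$ with $|PQ|\le \frac\pi2$ we have $h(\{P\}^\circ,\{Q\}^\circ)=h(H(P),H(Q))=|PQ|$, while $h(\{P\},\{Q\})=|PQ|$ as well, so single points only give ratio $1$. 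The key observation is that the factor $2$ comes from sets that are "thin in one direction" — so the candidate should be a pair of short arcs (or, in higher dimensions, short geodesic segments), not points.

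Concretely, I would take $W_t$ to be a tiny geodesic segment of length $2t$ centered at $P$ lying along some great circle, and $\widetilde W_t$ the same segment rotated by a small angle $\epsilon$ about $P$ (or translated along the perpendicular direction). One computes $h(W_t,\widetilde W_t)$ directly — it is of order $\epsilon t$ for the rotation, or just the displacement for a translation — and then computes $W_t^\circ$ and $\widetilde W_t^\circ$. Here $W_t^\circ = H(P_1)\cap H(P_2)$ where $P_1,P_2$ are the endpoints of the segment, i.e. a "lune-like" spherical convex body which is a hemisphere slightly pinched near the two poles $\pm$(the pole of the segment's great circle). The point of the construction is that a rotation of the segment $W_t$ by angle $\epsilon$ produces a displacement of the "tip" of the dual region $W_t^\circ$ that is roughly $\epsilon$ times the angular half-width, which can be made a factor close to $2$ larger than $h(W_t,\widetilde W_t)$ because the two defining half-spaces $H(P_1),H(P_2)$ each move and their intersection's extreme point moves by the sum of the two contributions. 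I would carry out the trigonometry in the limit $t\to 0$, $\epsilon$ fixed small (or $\epsilon\to 0$ afterwards), extracting the leading-order terms.

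The main obstacle I anticipate is bookkeeping the spherical trigonometry cleanly enough to identify the leading-order ratio as exactly $2$ rather than merely "bounded below by $2-o(1)$". In particular one must locate precisely the point of $W_t^\circ$ (resp. $\widetilde W_t^\circ$) realizing the Pompeiu–Hausdorff distance — it will be the vertex where the two bounding hemispheres meet — and verify that $d(x, \widetilde W_t^\circ)$ at that vertex $x$ genuinely equals (to leading order) twice the segment displacement. It helps to normalize so that $P$ is the north pole and the segment lies on a fixed meridian, reducing to $2$-dimensional computations on $S^2$ (the higher-dimensional case then follows by taking the segment inside a $2$-sphere cross-section and noting the polar operation respects this reduction). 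Once the leading coefficient is pinned at $2$, the claim follows: any $K<2$ fails the Lipschitz inequality for small $t$, and $K=2$ works by Proposition \ref{proposition 1} but is never achieved with equality on this family, so $2$ is the least coefficient and is approached but not attained.
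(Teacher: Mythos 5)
There is a genuine gap: your construction is never actually carried out, and on inspection it does not produce a ratio tending to $2$. Take the segment $W_t$ with endpoints $P_1=(\sin t,0,\cos t)$, $P_2=(-\sin t,0,\cos t)$ on $S^2$ and rotate it by $\epsilon$ about the axis through $P=(0,0,1)$. The endpoints move by about $\epsilon\sin t\approx\epsilon t$, so $h(W_t,\widetilde W_t)\approx \epsilon t$. The dual $W_t^\circ=H(P_1)\cap H(P_2)$ is a lune with vertices at $\pm V$, $V=(0,1,0)$, and the vertex of the rotated lune does move by about $\epsilon$, i.e.\ by a factor $1/t\gg 2$ more than the segment. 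But the Pompeiu--Hausdorff distance between the two lunes is not the displacement of the vertex: since $V\cdot P_i'=\pm\sin t\sin\epsilon$, the old vertex lies at depth only about $\epsilon t$ below one face of the rotated lune, hence within about $\epsilon t$ of the rotated lune itself (just not of its new vertex). A direct check gives $h(W_t^\circ,\widetilde W_t^\circ)\approx\epsilon t\approx h(W_t,\widetilde W_t)$, so the ratio is about $1$; the perpendicular-translation and shear variants behave the same way. Indeed, if ``the extreme point moves by the sum of the contributions'' controlled the Hausdorff distance, your family would give an unbounded ratio of order $1/t$, contradicting Proposition \ref{proposition 1} --- a sign that the heuristic cannot be right. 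The factor $2$ is a global phenomenon on $S^n$, not an infinitesimal one produced by thin sets.

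The example you dismissed in your first step is, with the roles reversed, exactly the one that works, and it is the paper's. You noted $\{P\}^\circ=H(P)$ but computed the ratio only for $|PQ|\le\frac{\pi}{2}$, where Lemma \ref{lemma 1} gives ratio $1$. Instead take $W_i=H(P_i)$ with $|P_1P_2|=r_1>\frac{\pi}{2}$ close to $\pi$. Every point of $S^n$ lies within $\frac{\pi}{2}$ of any closed hemisphere, so $h(W_1,W_2)\le\frac{\pi}{2}$, while $W_i^\circ=\{P_i\}$ gives $h(W_1^\circ,W_2^\circ)=r_1\to\pi$. Thus $r\,h(W_1,W_2)\le r\frac{\pi}{2}<r_1=h(W_1^\circ,W_2^\circ)$ for any $1<r<2$ once $r_1>r\frac{\pi}{2}$, so no $K<2$ is a Lipschitz coefficient; combined with Proposition \ref{proposition 1}, this proves the claim in two lines with no trigonometry.
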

\par
\noindent
{\bf Example:}\ \ For any real number $r\ (1<r<2)$, there exist a real number $r_{1}$ 
and two points $P_{1}, P_{2}\in S^{n}$ such that $r\frac{\pi}{2}<r_{1}<\pi$ and $d(P_{1}, P_{2})=r_{1}$. 
{\color{black} Since $H(P_{i})\subset S^{n}=
\overline{B\left(H(P_{j}), \frac{\pi}{2}\right)},\ \{i, j\}=\{1, 2\}$,
}
we have that $h(H(P_{1}), H(P_{2}))
{\color{black}\leq}
\frac{\pi}{2}$. 
Set $W_{1}=H(P_{1}), W_{2}=H(P_{2})$. Then, we have the following:
\[
rh(W_{1}, W_{2})
{\color{black}\leq}
\ r\frac{\pi}{2}
{\color{black} <}\ r_{1}=d( P_{1}, P_{2}) = h(\{P_{1}\}, \{P_{2}\})=h(W_{1}^{\circ}, W_{2}^{\circ}).
\]
It follows the following({\color{black}s}ee {\color{black} F}igure \ref{figure 1}):
\[
rh(W_{1}, W_{2})<h(W_{1}^{\circ}, W_{2}^{\circ}).
\]
\begin{figure}[hbtp]
\begin{center}
\includegraphics[width=4cm]{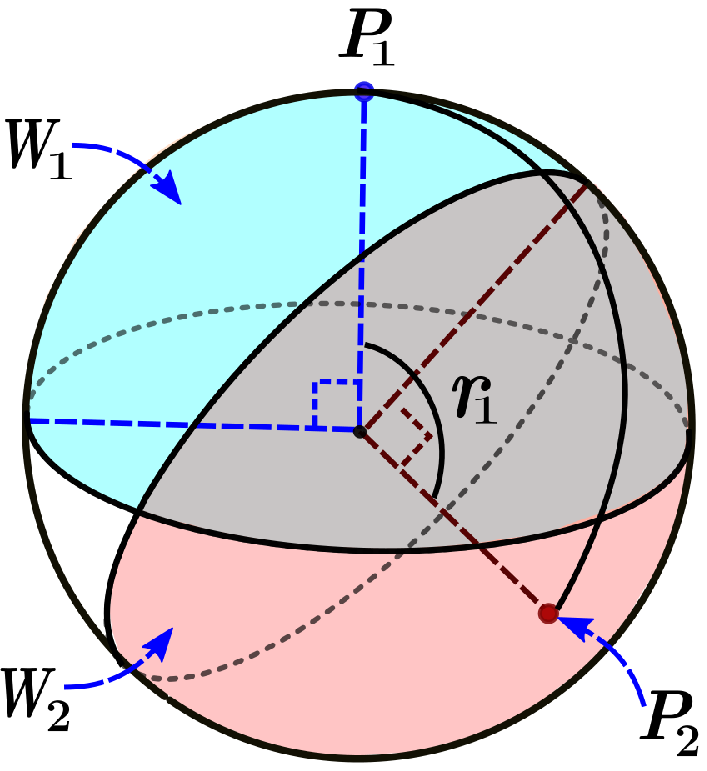}
\caption{$r\ h(W_{1}, W_{2})
<h({\color{black}\{{\color{black}P_1}\}, \{{\color{black}P_2}\}})\ (1<r<2)$.}
\label{figure 1}
\end{center}
\end{figure}
\par
By Proposition \ref{proposition 1}, we can extend Lemma \ref{lemma 4} as follows.
\begin{lemma}\label{lemma 5}
For any $X=\lim_{i\to \infty} X_{i}$, the following equality holds. 
\[ 
X=X^{\circ\circ},
\]
where $X_{i}\in \mathcal{H}_{\mbox{\rm s-conv}}(S^{n})$ \quad $ (i= 1, 2, 3, \dots)$.
\end{lemma}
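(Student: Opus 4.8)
The plan is to derive Lemma \ref{lemma 5} from the continuity of the spherical polar transform, which is part of Proposition \ref{proposition 1}, together with Lemma \ref{lemma 4}. Since $\bigcirc$ is defined only on $\mathcal{H}^\circ(S^n)$, the first task is to confirm that all the sets we need to feed into $\bigcirc$ — the $X_i$, the limit $X$, and their polars — really lie in $\mathcal{H}^\circ(S^n)$.

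For this, note that each $X_i$ is spherical convex, hence hemispherical, so there is a point $P_i\in S^n$ with $X_i\cap H(P_i)=\emptyset$; then $Q\cdot P_i<0$ for every $Q\in X_i$, so $-P_i\in\bigcap_{Q\in X_i}H(Q)=X_i^\circ$ and thus $X_i^\circ\neq\emptyset$. For $X=\lim_{i\to\infty}X_i$, I would use compactness of $S^n$ to pass to a subsequence along which $P_i\to P^*$ (the corresponding subsequence of $(X_i)$ still converges to $X$); then, given $Q\in X$, pick $Q_i\in X_i$ with $d(Q,Q_i)\to 0$, and pass to the limit in $Q_i\cdot P_i<0$ to get $Q\cdot P^*\le 0$, i.e.\ $-P^*\in H(Q)$. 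As $Q\in X$ is arbitrary this gives $-P^*\in X^\circ$, so $X\in\mathcal{H}^\circ(S^n)$; and $X_i^\circ,X^\circ\in\mathcal{H}^\circ(S^n)$ automatically, since $\bigcirc$ maps $\mathcal{H}^\circ(S^n)$ into itself.

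The main step is then short. Each $X_i$ satisfies $X_i=X_i^{\circ\circ}$: since $X_i$ is spherical convex it equals its own spherical convex hull, $X_i=\mbox{\rm s-conv}(X_i)$, so Lemma \ref{lemma 4} applied to the closed hemispherical set $X_i$ gives $X_i=\mbox{\rm s-conv}(X_i)=(\mbox{\rm s-conv}(X_i))^{\circ\circ}=X_i^{\circ\circ}$. By Proposition \ref{proposition 1} the map $\bigcirc$ is Lipschitz, hence continuous, on $\mathcal{H}^\circ(S^n)$; applying continuity twice to $X_i\to X$ gives $X_i^\circ\to X^\circ$ and then $X_i^{\circ\circ}\to X^{\circ\circ}$. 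Since $X_i^{\circ\circ}=X_i\to X$ and limits in $(\mathcal{H}(S^n),h)$ are unique, we conclude $X=X^{\circ\circ}$.

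The only delicate point, given Proposition \ref{proposition 1} and Lemma \ref{lemma 4}, is the bookkeeping of the second paragraph — verifying that $X$, the $X_i$, and their polars all belong to $\mathcal{H}^\circ(S^n)$, so that $\bigcirc$ and its continuity may legitimately be invoked; after that the ``limit'' computation is immediate. One could trade part of this for the general inclusion $X\subset X^{\circ\circ}$ and use continuity only for the reverse inclusion $X^{\circ\circ}\subset X$, but the well-definedness check would be essentially the same.
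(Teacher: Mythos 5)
Your proof is correct and takes essentially the same route as the paper's: establish $X_i = X_i^{\circ\circ}$ via Lemma \ref{lemma 4} (using that a spherical convex set equals its spherical convex hull), then pass to the limit using the Lipschitz continuity of $\bigcirc$ from Proposition \ref{proposition 1}. The additional bookkeeping verifying that $X$, the $X_i$, and their polars lie in $\mathcal{H}^\circ(S^n)$ is extra care that the paper omits, but it does not alter the argument.
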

\begin{proof} 
By Proposition \ref{proposition 1}, it follows that 
the composition $\bigcirc \circ \bigcirc$ is Lipschitz.    
Thus, if $X=\lim_{i\to \infty} X_i$ then $X^{\circ \circ}=\lim_{i\to \infty} X_{i}^{\circ \circ}$. By Lemma \ref{lemma 4}, $X=\lim_{i\to \infty} {X}_{i}=\lim_{i\to \infty} {X}_{i}^{\circ \circ}=X^{\circ \circ}$.
\end{proof}
\section{Proof of Theorem \ref{theorem 1}}\label{section 3}
\indent 
We first show that for any $W_{1}, W_{2}\in \mathcal{H}_{{\rm Wulff}}(S^{n}, P)$, the following holds:
\[
h(W_{1}^{\circ}, W_{2}^{\circ})\leq h(W_{1}, W_{2}).
\]
\par
\par
Suppose that there exist $W_{1}, W_{2}\in \mathcal{H}_{{\rm Wulff}}(S^{n}, P)$ such that $h(W_{1}, W_{2})< h(W_{1}^{\circ}, W_{2}^{\circ})$. Since $h(X, Y)< \frac{\pi}{2}$ for any $X, Y\in \mathcal{H}_{{\rm Wulff}}(S^{n}, P)$, it follows that $h(W_{1}, W_{2})< \frac{\pi}{2}$. Set $r=h(W_{1}, W_{2})$. Then, since $r=h(W_{1}, W_{2})< h(W_{1}^{\circ}, W_{2}^{\circ})$, 
it follows that at least one of $d(W_{1}^{\circ },W_{2}^{\circ })>r$ and $d(W_{2}^{\circ },W_{1}^{\circ })>r$ holds, where $d(A, B)$ is the distance from $A$ to $B$ defined in Section 2. Therefore, at least one of the following two holds.\\
(a) There exists a point $Q_{1}\in W_{1}^{\circ }$ such that $d(Q_{1}, R_{2})>r$ for any $R_{2}\in W_{2}^{\circ }.$\vspace{2mm}\\
(b) There exists a point $Q_{2}\in W_{2}^{\circ }$ such that $d(Q_{2}, R_{1})>r$ for any $R_{1}\in W_{1}^{\circ }.$\vspace{2mm}\\
\indent Suppose that (a) holds. This means that there exists a point $Q_{1}\in W_{1}^{\circ}$ such that $Q_{1}\notin \overline{B(W_{2}^{\circ}, r)}$. Then, by Lemma \ref{lemma 2}, we {\color{black} have the following:}
\[
Q_{1}\notin \overline{B(W_{2}^{\circ}, r)}=\overline{B\left(\bigcap_{\widetilde{Q}\in W_{2}}H(\widetilde{Q}), r\right)}=\bigcap_{\widetilde{Q}\in W_{2}}\overline{B(H(\widetilde{Q}), r)}. 
\]
Hence, there exists a point $R\in W_{2}$ such that $Q_{1}\notin \overline{B(H(R), r)}$.\\
\indent 
On the other hand, since $h(W_{1}, W_{2})=r$, 
there exists a point $\widetilde{P}_{R}\in W_{1}$ 
such that $d(\widetilde{P}_{R}, R)\leq r$. 
By Lemma \ref{lemma 1}, 
$Q_{1}\in H(\widetilde{P}_{R})\subset \overline{B(H(R), r)}.$ 
Therefore, we have a contradiction.\\
\indent In the same way, we can show that (b) implies a contradiction.\\
Therefore, we have proved that {\color{black}for any $W_{1}, W_{2}\in \mathcal{H}_{{\rm Wulff}}(S^{n}, P)$, the following holds:
\[
h(W_{1}^{\circ}, W_{2}^{\circ})\leq h(W_{1}, W_{2}).
\]}
\par
\indent
{\color{black} B}y Lemma \ref{lemma 4}, for any $W_{1}, W_{2}\in \mathcal{H}_{\rm Wulff}(S^{n}, P)$ we have the following:
\[
h(W_{1}, W_{2})\leq h(W_{1}^{\circ}, W_{2}^{\circ})\leq h(W_{1}, W_{2}).
\]
{\color{black}
Therefore, we have that $h(W_{1}, W_{2})=h(W_{1}^{\circ}, W_{2}^{\circ})$ for any 
$W_{1}, W_{2}\in \mathcal{H}_{{\rm Wulff}}(S^{n}, P)$.  
}
\par
\indent
(2) Let $W_{1}=\lim_{i\to \infty}W_{1_{i}}, W_{2}=\lim_{i\to \infty}W_{2_{i}}$, 
where $W_{1_{i}}, W_{2_{i}}\in \mathcal{H}_{\rm Wulff}(S^{n}, P)$ 
for any $i\in \mathbb{N}$. {\color{black} By (1), 
we have that} $h(W_{1_{i}}, W_{2_{i}})=h(W_{1_{i}}^{\circ}, W_{2_{i}}^{\circ})$. 
By Proposition \ref{proposition 1}, we have that 
\begin{eqnarray*}
h(W_{1}, W_{2})& = & h(\lim_{i\to \infty}W_{1_{i}}, \lim_{i\to \infty}W_{2_{i}})
=\lim_{i\to \infty}h(W_{1_{i}}, W_{2_{i}})\\
 & = & \lim_{i\to \infty}h(W_{1_{i}}^{\circ}, W_{2_{i}}^{\circ})
= h(\lim_{i\to \infty}W_{1_{i}}^{\circ}, \lim_{i\to \infty}W_{2_{i}}^{\circ})
=h(W_{1}^{\circ}, W_{2}^{\circ}). 
\end{eqnarray*}
\hfill {$\Box$} \\
\section{Proof of Theorem \ref{theorem 2}}\label{section 4}
By the proof of Proposition \ref{proposition 1}, we have that 
\begin{eqnarray*}
h(W_{1}^{\circ},W_{2}^{\circ}) & \leq & 2h(W_{1},W_{2}) \\
h(W_{1}^{\circ \circ},W_{2}^{\circ \circ}) & \leq & 2 h(W_{1}^{\circ},W_{2}^{\circ})
\end{eqnarray*}
for any $W_{1}, W_{2}\in \mathcal{H}_{\mbox{\rm s-conv}}(S^{n})$. \\
By Lemma \ref{lemma 5}, we have {\color{black} the following 
for any $W_{1}, W_{2}\in \overline{\mathcal{H}_{\mbox{\rm s-conv}}(S^{n})}$:}
\[
W_{1}^{\circ \circ} =  W_{1},  W_{2}^{\circ \circ}  =  W_{2}.
\]
Therefore, the following inequality holds for any 
$W_{1}, W_{2}\in \overline{\mathcal{H}_{\mbox{\rm s-conv}}(S^{n})}$:
\[
\frac{1}{2}h(W_{1},W_{2})\leq h(W_{1}^{\circ},W_{2}^{\circ})\leq 2h(W_{1},W_{2}).
\] 
\par
Hence, $\bigcirc _{\mbox{\rm s-conv}}: 
\overline{\mathcal{H}_{\mbox{\rm s-conv}}
(S^{n})}\rightarrow \overline{\mathcal{H}_{\mbox{\rm s-conv}}
(S^{n})}$ is bi-{\color{black} L}ipschitz. \\
\indent
By Claim \ref{claim 1}, it is clear that $\bigcirc_{\mbox{\rm s-conv}}: 
\overline{\mathcal{H}_{\mbox{\rm s-conv}}(S^{\circ})}\rightarrow 
\overline{\mathcal{H}_{\mbox{\rm s-conv}}(S^{\circ})}$ is 
never isometric. \hfill {$\Box$}\\

\section{Appendix}\label{section 5}
\subsection{Mappings in Theorems are well-defined bijections}
\label{subsection 5.1}
{\color{black}
\begin{proposition}\label{proposition 2}
\begin{enumerate}
\item 
For any point $P\in S^n$, 
 $\overline{\mathcal{H}_{\rm Wulff}(S^n, P)}\subset \mathcal{H}^\circ(S^n)$.  
\item 
For any point $P\in S^n$, 
$\bigcirc({\mathcal{H}_{\rm Wulff}(S^n, P)})
= 
{\mathcal{H}_{\rm Wulff}(S^n, P)}$.   
\item 
For any point $P\in S^n$, 
$\bigcirc(\overline{\mathcal{H}_{\rm Wulff}(S^n, P)})
= 
\overline{\mathcal{H}_{\rm Wulff}(S^n, P)}$.   
\item 
For any point $P\in S^n$, 
the restriction of $\bigcirc$ to 
$\overline{\mathcal{H}_{\rm Wulff}(S^n, P)}$  
is injective.  
\item 
$\overline{\mathcal{H}_{\mbox{\rm s-conv}}(S^n)}\subset \mathcal{H}^\circ(S^n)$.  
\item 
$\bigcirc({\mathcal{H}_{\mbox{\rm s-conv}}(S^n)})
\ne  
{\mathcal{H}_{\mbox{\rm s-conv}}(S^n)}$.   
\item 
$\bigcirc(\overline{\mathcal{H}_{\mbox{\rm s-conv}}(S^n)})
= 
\overline{\mathcal{H}_{\mbox{\rm s-conv}}(S^n)}$.   
\item 
The restriction of $\bigcirc$ to 
$\overline{\mathcal{H}_{\mbox{\rm s-conv}}(S^n)}$ 
is injective.   
\end{enumerate}
\end{proposition}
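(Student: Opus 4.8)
The plan is to treat the eight assertions in four groups, using only Lemmas \ref{lemma 3}, \ref{lemma 4}, \ref{lemma 5} and the continuity of $\bigcirc$ furnished by Proposition \ref{proposition 1}, together with the elementary equivalence ``$W\subset H(Q_0)\iff Q_0\in W^\circ$''. \emph{Items (1) and (5)} both follow from the observation that a Pompeiu--Hausdorff limit of sets contained in a common hemisphere is again contained in one hemisphere. For (1): if $W=\lim_{i\to\infty}W_i$ with $W_i\in\mathcal{H}_{\rm Wulff}(S^n,P)$, then $W_i\subset S^n\setminus H(-P)=\{Q\in S^n\mid Q\cdot P>0\}$, so every $Q\in W$ satisfies $Q\cdot P\ge 0$, i.e. $W\subset H(P)$, hence $P\in\bigcap_{Q\in W}H(Q)=W^\circ\ne\emptyset$. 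For (5): if $W=\lim_{i\to\infty}W_i$ with each $W_i$ closed spherical convex, pick $P_i$ with $W_i\cap H(P_i)=\emptyset$; passing to a subsequence with $P_i\to P_\ast$ (compactness of $S^n$) gives $W\subset H(-P_\ast)$, so $-P_\ast\in W^\circ\ne\emptyset$. Since limits in $(\mathcal{H}(S^n),h)$ are nonempty compact sets, both closures lie in $\mathcal{H}^\circ(S^n)$.

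\emph{Items (2), (3), (4).} First I would check that $\bigcirc$ maps $\mathcal{H}_{\rm Wulff}(S^n,P)$ into itself. Given $W\in\mathcal{H}_{\rm Wulff}(S^n,P)$, choose $\varepsilon>0$ with $\overline{B(P,\varepsilon)}\subset W$ (possible since $P\in\mbox{int}(W)$) and $\delta>0$ with $W\subset\overline{B(P,\frac\pi2-\delta)}$ (possible since $W$ is compact and disjoint from the closed set $H(-P)$). The elementary identities $(\overline{B(P,\varepsilon)})^\circ=\overline{B(P,\frac\pi2-\varepsilon)}$ and $(\overline{B(P,\frac\pi2-\delta)})^\circ=\overline{B(P,\delta)}$, together with the inclusion-reversing Lemma \ref{lemma 3}, give $\overline{B(P,\delta)}\subset W^\circ\subset\overline{B(P,\frac\pi2-\varepsilon)}$; the left inclusion yields $P\in\mbox{int}(W^\circ)$ and the right one yields $W^\circ\cap H(-P)=\emptyset$. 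Moreover $W^\circ=\bigcap_{Q\in W}H(Q)$ is closed, it is hemispherical by the previous sentence, and it is spherical convex because $Q\cdot R_1\ge 0$ and $Q\cdot R_2\ge 0$ force $Q\cdot R\ge 0$ for every $R$ on the arc $R_1R_2$; hence $W^\circ$ is a spherical convex body and $W^\circ\in\mathcal{H}_{\rm Wulff}(S^n,P)$. By Lemma \ref{lemma 4}, $W^{\circ\circ}=\mbox{\rm s-conv}(W)=W$, so the restriction of $\bigcirc$ is an involution on $\mathcal{H}_{\rm Wulff}(S^n,P)$, in particular a bijection of it onto itself, which is (2). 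For (3): $\bigcirc$ is continuous, so $\bigcirc(\overline{\mathcal{H}_{\rm Wulff}(S^n,P)})\subset\overline{\bigcirc(\mathcal{H}_{\rm Wulff}(S^n,P))}=\overline{\mathcal{H}_{\rm Wulff}(S^n,P)}$; conversely, since $\mathcal{H}_{\rm Wulff}(S^n,P)\subset\mathcal{H}_{\mbox{\rm s-conv}}(S^n)$, Lemma \ref{lemma 5} gives $V=V^{\circ\circ}=\bigcirc(V^\circ)$ with $V^\circ=\bigcirc(V)$ again in the closure, proving surjectivity and hence equality. For (4): if $W_1^\circ=W_2^\circ$ then $W_1=W_1^{\circ\circ}=W_2^{\circ\circ}=W_2$ by Lemma \ref{lemma 5}.

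\emph{Item (6).} The point is that $\bigcirc$ destroys spherical convexity of lower-dimensional sets. Fix $A\in S^n$ and take $W=\{A\}$, which is closed and spherical convex, hence $W\in\mathcal{H}_{\mbox{\rm s-conv}}(S^n)$, whereas $W^\circ=H(A)$ is not hemispherical (it contains a pair of antipodal points of $\partial H(A)$) and therefore not spherical convex, so $H(A)\notin\mathcal{H}_{\mbox{\rm s-conv}}(S^n)$; since $H(A)=\bigcirc(\{A\})$ does lie in $\bigcirc(\mathcal{H}_{\mbox{\rm s-conv}}(S^n))$, the two sets differ. Equivalently, $\{A\}$ itself is not in the image: $\{A\}=V^\circ$ with $V$ closed spherical convex would force $V=V^{\circ\circ}=\{A\}^\circ=H(A)$ by Lemma \ref{lemma 4}, which is absurd.

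\emph{Items (7) and (8).} Item (8) is as in (4): $W_1^\circ=W_2^\circ$ forces $W_1=W_1^{\circ\circ}=W_2^{\circ\circ}=W_2$ by Lemma \ref{lemma 5}. For (7), by continuity of $\bigcirc$ it suffices to prove $\bigcirc(\mathcal{H}_{\mbox{\rm s-conv}}(S^n))\subset\overline{\mathcal{H}_{\mbox{\rm s-conv}}(S^n)}$; then $\bigcirc(\overline{\mathcal{H}_{\mbox{\rm s-conv}}(S^n)})\subset\overline{\bigcirc(\mathcal{H}_{\mbox{\rm s-conv}}(S^n))}\subset\overline{\mathcal{H}_{\mbox{\rm s-conv}}(S^n)}$, and surjectivity follows from $V=V^{\circ\circ}=\bigcirc(V^\circ)$ (Lemma \ref{lemma 5}) with $V^\circ$ in the closure. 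If $W\in\mathcal{H}_{\mbox{\rm s-conv}}(S^n)$ has an interior point, the computation from the second group shows $W^\circ\in\mathcal{H}_{\mbox{\rm s-conv}}(S^n)$ directly; otherwise I would approximate $W$ from outside by spherical convex bodies, for instance $W_i=\mbox{\rm s-conv}(W\cup\overline{B(Q_0,1/i)})$ for a fixed $Q_0\in W$, which for large $i$ is hemispherical, has nonempty interior, and satisfies $W\subset W_i\subset\overline{B(W,1/i)}$, so that $W=\lim_{i\to\infty}W_i$ and $W^\circ=\lim_{i\to\infty}W_i^\circ\in\overline{\mathcal{H}_{\mbox{\rm s-conv}}(S^n)}$. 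I expect this last step --- verifying that such a fattening of a closed spherical convex set is again spherical convex, stays inside $\overline{B(W,1/i)}$, and is compact --- to be the main technical obstacle; the remainder of the argument is bookkeeping with Lemmas \ref{lemma 3}, \ref{lemma 4}, \ref{lemma 5} and the continuity of $\bigcirc$.
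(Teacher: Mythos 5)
Your items (1)--(4), (6) and (8) follow the paper's own argument essentially step for step: the sandwich $\overline{B(P,r_1)}\subset W\subset \overline{B(P,r_2)}$ plus the inclusion-reversing Lemma~\ref{lemma 3} to show $\bigcirc$ preserves $\mathcal{H}_{\rm Wulff}(S^n,P)$, Lemmas~\ref{lemma 4} and~\ref{lemma 5} for surjectivity and injectivity, continuity of $\bigcirc$ (Proposition~\ref{proposition 1}) to pass to closures, and $\bigcirc(\{A\})=H(A)$ for (6). You diverge exactly where the paper invokes Proposition~\ref{relation}, namely in (5) and (7). Your replacement for (5) --- extract a convergent subsequence of excluded poles $P_i$ and pass to the limit in $Q\cdot P_i<0$ to get $W\subset H(-P_\ast)$ --- is complete, self-contained, and arguably cleaner than routing through $\bigcup_{P}\overline{\mathcal{H}_{\rm Wulff}(S^n,P)}=\overline{\mathcal{H}_{\mbox{\rm s-conv}}(S^n)}$.

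Item (7), however, contains the genuine gap that you yourself flag. The whole assertion reduces to showing that a hemispherical closed spherical convex set $W$ with empty interior is a Pompeiu--Hausdorff limit of spherical convex bodies, and you only sketch this via $W_i=\mbox{\rm s-conv}\bigl(W\cup\overline{B(Q_0,1/i)}\bigr)$. Three things there need proof. First, the literal inclusion $W_i\subset\overline{B(W,1/i)}$ is not correct as stated: writing a hull point as $(v+w)/\lVert v+w\rVert$ with $v$ a convex combination of points of $W\cup\{Q_0\}$ and $\lVert w\rVert\le 1/i$, the normalization distorts distances by a factor controlled by $c=\min\{Q\cdot(-P)\mid Q\in W\}>0$, so one only gets $W_i\subset\overline{B(W,C/i)}$ for a constant $C=C(c)$; this still yields $W=\lim_{i\to\infty}W_i$, but it is not free. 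Second, compactness of the spherical convex hull (via Carath\'eodory) and hemisphericity of $W_i$ for large $i$ each need a line. Third, one must still pass from $\mathcal{H}_{\mbox{\rm s-conv}}(S^n)$ to its closure by a diagonal/continuity argument. This omitted verification is precisely the technical content that the paper isolates into the proof of Proposition~\ref{relation}, where the approximating bodies are taken to be $\overline{B(W,1/i)}\cap\overline{B(P,\frac{\pi}{2}-\frac{1}{i})}$ instead; either route works, but as written your (7) is an outline rather than a proof.
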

}
\noindent 
{\bf Proof of Proposition \ref{proposition 2}. }
\quad{}  
\par 
{\it Proof of the assertion (1) of Proposition \ref{proposition 2}.}
\quad 
 {\color{black}It is clear that f}or any $W\in \overline{\mathcal{H}_{\rm Wul{\color{black}f}f}(S^{n}, P)}$,  
we have that $W\subset H(P)$. Thus, by Lemma \ref{lemma 3}, it follows that $P\in W^{\circ}$, {\color{black} which implies $W^{\circ}\neq \emptyset$}.   
\hfill {$\Box$}\\
\par 
\smallskip 
{\it Proof of the assertion (2) of Proposition \ref{proposition 2}.}
\quad 
{\color{black}    We first show that 
$\bigcirc (W)\in \mathcal{H}_{\rm Wulff}(S^{n}, P)$ for any $W\in \mathcal{H}_{\rm Wulff}(S^{n}, P)$.    
For any $W\in \mathcal{H}_{\rm Wulff}(S^{n}, P)$, 
there exist $r_{1}, r_{2}$\ $(0<r_{1}<r_{2}<\frac{\pi}{2})$ such that $\overline{B(P, r_{1})}\subset W \subset \overline{B(P, r_{2})}$.    
By Lemma \ref{lemma 3}, we have the following:
\[
\left(\overline{B(P, r_{2})}\right)^{\circ}\subset W^{\circ}\subset \left(\overline{B(P, r_{1})}\right)^{\circ}.
\]
It follows that $W^{\color{black}\circ}\cap H(-P)
=\emptyset$ and $P\in \mbox{\rm int}(W^{\color{black}\circ})$.   
Let $Q_1, Q_2$ be two points of $W^\circ=\bigcap_{Q\in W}H(Q)$.   
Since $W^{\color{black}\circ}\cap H(-P)=\emptyset$, it follows that 
$(1-t)Q_1+tQ_2$ is not the zero vector for any $t\in [0,1]$.    
Thus, for any $t\in [0,1]$ we have the following:   
\[
\frac{(1-t)Q_1+tQ_2}{||(1-t)Q_1+tQ_2 ||} \in \bigcap_{Q\in W}H(Q)=W^\circ.  
\]   
It follows that $W^\circ$ is spherical convex.   
Therefore, $\bigcirc (W)$ is contained in $\mathcal{H}_{\rm Wulff}(S^{n}, P)$.  
\par 
Next, we show that for any $W\in \mathcal{H}_{\rm Wulff}(S^{n}, P)$, 
there exists an element $\widetilde{W}\in \mathcal{H}_{\rm Wulff}(S^{n}, P)$ 
such that $\bigcirc (\widetilde{W})=W$.     
For any $W\in \mathcal{H}_{\rm Wulff}(S^{n}, P)$, 
set $\widetilde{W}=W^\circ$.  
Then, we have already proved that 
$\widetilde{W}$ is contained in $\mathcal{H}_{\rm Wulff}(S^{n}, P)$.  
By Lemma \ref{lemma 4}, 
it follows that $\bigcirc (\widetilde{W})=W$.   
\hfill {$\Box$}
\par 
\smallskip 
{\it Proof of the assertion (3) of Proposition \ref{proposition 2}.}
\quad 
{\color{black}    
We first show that 
$\bigcirc (W)\in \overline{\mathcal{H}_{\rm Wulff}(S^{n}, P)}$ 
for any $W\in \overline{\mathcal{H}_{\rm Wulff}(S^{n}, P)}$.    
For any $W\in \overline{\mathcal{H}_{\rm Wulff}(S^{n}, P)}$, 
there exists a convergent sequence 
$W_i\in \mathcal{H}_{\rm Wulff}(S^{n}, P)$ such that 
$W=\lim_{i\to \infty}W_i$.    
Since 
$\bigcirc : \mathcal{H}^\circ (S^n)\to \mathcal{H}^\circ (S^n)$ is continuous, 
it follows that $W^\circ =\lim_{i\to \infty}W_i^\circ$.   
Since we have already proved the assertion (2) of Proposition \ref{proposition 2}, 
it follows that $\bigcirc (W)\in \overline{\mathcal{H}_{\rm Wulff}(S^{n}, P)}$.    
\par 
Next, we show that for any $W\in \overline{\mathcal{H}_{\rm Wulff}(S^{n}, P)}$, 
there exists an element 
$\widetilde{W}\in \overline{\mathcal{H}_{\rm Wulff}(S^{n}, P)}$ 
such that $\bigcirc (\widetilde{W})=W$.     
For any $W\in \overline{\mathcal{H}_{\rm Wulff}(S^{n}, P)}$, 
set $\widetilde{W}=W^\circ$.  
Then, we have already proved that 
$\widetilde{W}$ is contained in $\overline{\mathcal{H}_{\rm Wulff}(S^{n}, P)}$.  
By Lemma \ref{lemma 5}, 
it follows that $\bigcirc (\widetilde{W})=W$.   
\hfill {$\Box$}
\par 
\smallskip 
{\it Proof of the assertion (4) of Proposition \ref{proposition 2}.}
\quad 
Suppose that there exist $W_{1}=\lim_{i\to \infty}W_{1_{i}}, W_{2}=\lim_{i\to \infty} W_{2_{i}}\in \overline{\mathcal{H}_{\rm Wulff}(S^{n}, P)}$ such that $W_{1}^{\circ}=W_{2}^{\circ}$,
 where $W_{1_{i}}, W_{2_{i}}\in \mathcal{H}_{\rm W{\color{black}u}lff}(S^{n}, P)$ for any $i\in \mathbb{N}$.
 Since $W_{1_{i}}, W_{2_{i}}$ are spherical convex, by Lemma \ref{lemma 5} the following holds:
\[
W_{1}=W_{1}^{\circ \circ}=W_{2}^{\circ \circ}=W_{2}.
\]
Therefore, for any point $P\in S^{n}$, the restriction of $\bigcirc$ to $\overline{\mathcal{H}_{\rm Wulff}(S^{n}, P)}$ is injective.
\hfill {$\Box$}
\par 
\smallskip 
{\it Proof of the assertion (5) of Proposition \ref{proposition 2}.}
\quad 
Let $W$ be an element of $\overline{\mathcal{H}_{\mbox{\rm s-conv}}(S^n)}$.   
Then, by Proposition \ref{relation}, there exists a point $P\in S^n$ 
such that $W\in \overline{\mathcal{H}_{\rm Wulff}(S^n, P)}$.    
Since we have already proved the assertion (1) of Proposition \ref{proposition 2}, 
it follows that $\bigcirc (W)\in \mathcal{H}^\circ (S^n)$.   
\hfill {$\Box$}
\par 
\smallskip 
{\it Proof of the assertion (6) of Proposition \ref{proposition 2}.} 
\quad 
For any point $P\in S^{n}$, 
$\bigcirc({\color{black} \{}P{\color{black} \}})=H(P)$ is not hemispherical.   
Therefore, 
$\bigcirc(\mathcal{H}_{\mbox{\rm s-conv}}(S^{n}))\neq 
\mathcal{H}_{\mbox{\rm s-conv}}(S^{n})$. \hfill {$\Box$}\\
\par 
\smallskip 
{\it Proof of the assertion (7) of Proposition \ref{proposition 2}.}
\quad 
{\color{black} We first show that 
$\bigcirc(W)\in \overline{\mathcal{H}_{\mbox{\rm s-conv}}(S^{n})}$ 
for any $W\in \overline{\mathcal{H}_{\mbox{\rm s-conv}}(S^{n})}$. 
For any $W\in \overline{\mathcal{H}_{\mbox{\rm s-conv}}(S^{n})}$,
 by Proposition \ref{relation}, 
$W\in \bigcup_{P\in S^{n}}\overline{\mathcal{H}_{\rm Wulff}(S^{n}, P)}$. 
It follows that there exists spherical convex body subsequence 
$\{\widetilde{W}_{i}\}_{i=1, 2, 3, \dots}$ such that 
$W=\lim_{i\rightarrow \infty}\widetilde{W}_{i}$, 
where $\widetilde{W}_{i}\in \mathcal{H}_{\rm Wulff}(S^{n}, P_{i})$. 
Since $\bigcirc$ is continuous and 
$\widetilde{W}_{i}^{\circ} \in \overline{\mathcal{H}_{\mbox{\rm s-conv}}(S^{n})}$ 
($i=1, 2, 3, \dots$), 
we have that $\lim_{n\to \infty}\widetilde{W}_{i}^{\circ}
=W^{\circ}\in \overline{\mathcal{H}_{\mbox{\rm s-conv}}(S^{n})}$.\\
\indent
Next, we show that for any $W\in \overline{\mathcal{H}_{\mbox{\rm s-conv}}(S^{n})}$,
 there exists an element 
$\widetilde{W}\in \overline{\mathcal{H}_{\mbox{\rm s-conv}}(S^{n})}$ 
such that $\bigcirc(\widetilde{W})=W$.
 For any $W \in \overline{\mathcal{H}_{\mbox{\rm s-conv}}(S^{n})}$, 
set $\widetilde{W}=W^{\circ}$. By Lemma \ref{lemma 5}, 
it follows that $W=\bigcirc(W^{\circ})$.}\hfill $\square$\\
\par 
\smallskip 
{\it Proof of the assertion (8) of Proposition \ref{proposition 2}.}
\quad 
Suppose that there exist $W_{1}=\lim_{i\to \infty}W_{1_{i}}, 
W_{2}=\lim_{i\to \infty} W_{2_{i}}\in \overline{\mathcal{H}_{\mbox{\rm s-conv} }(S^{n})}$ 
such that $W_{1}^{\circ}=W_{2}^{\circ}$, 
where $W_{1_{i}}, W_{2_{i}}\in \mathcal{H}_{\mbox{\rm s-conv}}(S^{n})$ 
for any $i\in \mathbb{N}$. Since $W_{1_{i}}, W_{2_{i}}$ 
are spherical convex, by Lemma \ref{lemma 5} the following holds:
\[
W_{1}=W_{1}^{\circ \circ}=W_{2}^{\circ \circ}=W_{2}.
\]
Therefore, for any point $P\in S^{n}$, 
the restriction of $\bigcirc$ to $\overline{\mathcal{H}_{\mbox{\rm s-conv}}(S^{n})}$ 
is injective.\hfill {$\Box$}\\
}
}
By Proposition \ref{proposition 2}, {\color{black}we have the following:   
\begin{proposition}\label{lemma 1.1}
Each of the following is {\color{black}a} well-defined bijective mapping.   
\begin{eqnarray*}
\bigcirc_{{\rm Wulff}, P}: 
{\mathcal{H}_{\rm Wulff}(S^n, P)} 
& \to &  
{\mathcal{H}_{\rm Wulff}(S^n, P)},  \\ 
\overline{\bigcirc_{{\rm Wulff}, P}}: 
\overline{\mathcal{H}_{\rm Wulff}(S^n, P)} 
& \to &  
\overline{\mathcal{H}_{\rm Wulff}(S^n, P)},  \\ 
\bigcirc_{\mbox{\rm s-conv}}: \overline{\mathcal{H}_{\mbox{\rm s-conv}}(S^n)} 
& \to &  
\overline{\mathcal{H}_{\mbox{\rm s-conv}}(S^n)}.    \\  
\end{eqnarray*} 
\end{proposition}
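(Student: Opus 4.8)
The plan is simply to collect the eight assertions of Proposition \ref{proposition 2}, since between them they contain precisely the three ingredients---well-definedness as a self-map, surjectivity, and injectivity---needed for each of the three restrictions. The one point that deserves care is \emph{why the restriction even makes sense}: the spherical polar transform $\bigcirc$ is a priori only defined on $\mathcal{H}^\circ(S^n)$ (recall the footnote that $(S^n)^\circ=\emptyset$, so $\bigcirc$ cannot be extended to all of $\mathcal{H}(S^n)$), so before restricting we must know that each of the subspaces in question lies inside $\mathcal{H}^\circ(S^n)$.

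First I would treat $\bigcirc_{{\rm Wulff},P}$ and $\overline{\bigcirc_{{\rm Wulff},P}}$ together. Assertion (1) of Proposition \ref{proposition 2} gives $\mathcal{H}_{\rm Wulff}(S^n,P)\subset\overline{\mathcal{H}_{\rm Wulff}(S^n,P)}\subset\mathcal{H}^\circ(S^n)$, so in both cases the restriction of $\bigcirc$ is a legitimate map. Assertion (2) is the equality $\bigcirc(\mathcal{H}_{\rm Wulff}(S^n,P))=\mathcal{H}_{\rm Wulff}(S^n,P)$: its forward inclusion says $\bigcirc_{{\rm Wulff},P}$ is a well-defined self-map, and its reverse inclusion says it is surjective; assertion (3) does the same for $\overline{\bigcirc_{{\rm Wulff},P}}$. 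Finally assertion (4) says $\bigcirc$ restricted to $\overline{\mathcal{H}_{\rm Wulff}(S^n,P)}$ is injective, hence so is its further restriction to the subset $\mathcal{H}_{\rm Wulff}(S^n,P)$. Thus both maps are well-defined bijections.

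For $\bigcirc_{\mbox{\rm s-conv}}$ the argument is the same, reading off assertion (5) for the inclusion $\overline{\mathcal{H}_{\mbox{\rm s-conv}}(S^n)}\subset\mathcal{H}^\circ(S^n)$, assertion (7) for well-definedness as a self-map together with surjectivity, and assertion (8) for injectivity; assertion (6) plays no role here, as it only records the contrasting fact that $\bigcirc$ does not preserve the smaller, non-closed set $\mathcal{H}_{\mbox{\rm s-conv}}(S^n)$. I do not expect any genuine obstacle: all the substance has been front-loaded into Proposition \ref{proposition 2}, and the present statement is pure bookkeeping. If there is a subtle point worth flagging, it is exactly the use of assertions (1) and (5)---without them the expression $\bigcirc(W)$ would not even be defined for $W$ in these subspaces.
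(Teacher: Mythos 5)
Your proposal is correct and is essentially the paper's own argument: the paper deduces Proposition \ref{lemma 1.1} directly from Proposition \ref{proposition 2}, exactly as you do, merely without spelling out which assertion supplies well-definedness, surjectivity, and injectivity for each map. Your additional care about assertions (1) and (5) guaranteeing that the subspaces lie in $\mathcal{H}^\circ(S^n)$ is a reasonable (and correct) elaboration of the same bookkeeping.
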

}
Moreover, by the assertion (6) of Proposition \ref{proposition 2}, 
it follows that the restriction of $\bigcirc$ to $\mathcal{H}_{\mbox{s-conv}}(S^n)$ is not a transform.      Thus, the restriction to this subspace 
is not investigated in this paper. 
\subsection{Why the restriction of $\bigcirc$ to $\overline{\mathcal{H}_{\mbox{s-conv}}(S^n)}$ is important ?}
\label{subsection 5.2}
\quad {}
\par 
It is natural to expect that the isometric property still holds 
even for the restriction of $\bigcirc$ to  
$\bigcup_{P\in S^n}\overline{\mathcal{H}_{\rm Wulff}(S^n, P)}$.   
Since this subspace of $\mathcal{H}^\circ(S^n)$ seems to be 
complicated, we want to have a translation of this space into a subspace of $\mathcal{H}^\circ(S^n)$ which is easy to treat.     
The following proposition is the desired translation.   
Hence, the subspace $\overline{\mathcal{H}_{\mbox{s-conv}}(S^n)}$ 
naturally arises in our study and the restriction of $\bigcirc$ to $\overline{\mathcal{H}_{\mbox{s-conv}}(S^n)}$ is important.    
{\color{black}
\begin{proposition}\label{relation}
\[
\bigcup_{P\in S^n}\overline{\mathcal{H}_{\rm Wulff}(S^n, P)}   
=\overline{\mathcal{H}_{\mbox{\rm s-conv}}(S^n)}. 
 \]
\end{proposition}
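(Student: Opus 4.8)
The plan is to prove the set equality by establishing the two inclusions separately, using Definition \ref{definition closure} together with the fact (assertion (1) of Proposition \ref{proposition 2}, or rather its Wulff analogue) that every element of $\overline{\mathcal{H}_{\rm Wulff}(S^n,P)}$ lies in $\mathcal{H}^\circ(S^n)$. For the inclusion ``$\subset$'', let $W \in \overline{\mathcal{H}_{\rm Wulff}(S^n,P)}$ for some $P$; then $W = \lim_{i\to\infty} W_i$ with each $W_i$ a spherical convex body contained in $H(P)$, hence each $W_i$ is spherical convex and closed. I would argue that the Pompeiu--Hausdorff limit of a sequence of closed spherical convex subsets of a fixed hemisphere $H(P)$ is again closed and spherical convex: closedness is automatic since $\mathcal{H}(S^n)$ consists of closed sets, hemisphericality is inherited because $W \subset H(P)$ passes to the limit (in fact one can shrink slightly to get $W$ inside an open hemisphere, so $W$ is hemispherical in the sense of Definition \ref{definition closure}), and spherical convexity passes to the limit by a standard argument: given $P',Q' \in W$, pick $P_i',Q_i' \in W_i$ with $P_i'\to P'$, $Q_i'\to Q'$, note $P_i'Q_i' \subset W_i$, and take limits of the arcs, using continuity of the parametrization $t \mapsto \frac{(1-t)P'+tQ'}{\|(1-t)P'+tQ'\|}$ on the hemisphere. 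Hence $W \in \overline{\mathcal{H}_{\mbox{\rm s-conv}}(S^n)}$.

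For the reverse inclusion ``$\supset$'', the crux is: given a closed spherical convex set $W$ that is a limit of spherical convex closed sets, produce a single point $P$ with $W \in \overline{\mathcal{H}_{\rm Wulff}(S^n,P)}$. The natural strategy is to take any $W_0 \in \mathcal{H}_{\mbox{\rm s-conv}}(S^n)$ close to $W$, then observe that a bounded spherical convex set with nonempty interior inside an open hemisphere can be approximated from inside by spherical convex bodies that contain a fixed interior point $P$ of $W_0$ and are bounded away from $H(-P)$, i.e. lie in $\mathcal{H}_{\rm Wulff}(S^n,P)$. More precisely: if $W$ is itself a spherical convex body (the generic case), choose $P \in \mathrm{int}(W)$; since $W$ is hemispherical it lies in some open hemisphere, so after possibly shrinking we get $W \cap H(-P) = \emptyset$ is \emph{not} automatic but $W$ being in an open hemisphere centered somewhere gives a point $P' $ with $W\subset \mathrm{int}\,H(P')$, and by a convexity argument one can take $P' \in \mathrm{int}(W)$ — this identifies $W \in \mathcal{H}_{\rm Wulff}(S^n,P')\subset \overline{\mathcal{H}_{\rm Wulff}(S^n,P')}$. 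If $W$ is only a limit, write $W = \lim W_i$ with $W_i$ spherical convex and closed; approximate each $W_i$ itself by genuine spherical convex bodies $W_{i,j}$ (fattening slightly and intersecting with a hemisphere, exactly as in the proof of Proposition \ref{proposition 1}), each lying in $\mathcal{H}_{\rm Wulff}(S^n, P_{i,j})$ for a suitable $P_{i,j}$; a diagonal sequence then exhibits $W$ as a limit of elements of $\bigcup_P \mathcal{H}_{\rm Wulff}(S^n,P)$, hence $W \in \overline{\mathcal{H}_{\rm Wulff}(S^n, \widehat P)}$ for the limiting center $\widehat P$ (passing to a convergent subsequence of the $P_{i,j}$, using compactness of $S^n$).

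The main obstacle I anticipate is the bookkeeping in the ``$\supset$'' direction: one must ensure the approximating spherical convex \emph{bodies} can all be arranged to contain a \emph{common} interior point $P$ and to stay a fixed positive distance from $H(-P)$, so that they genuinely live in a single $\mathcal{H}_{\rm Wulff}(S^n,P)$ rather than in spaces with drifting centers — and then argue the drift of centers is controlled (by compactness of $S^n$) so the limit lands in $\overline{\mathcal{H}_{\rm Wulff}(S^n,\widehat P)}$ for a single $\widehat P$. The degenerate cases — where $W$ has empty interior, or touches the boundary of every hemisphere containing it — need care, but these are precisely the boundary points that $\overline{(\cdot)}$ is designed to absorb, so they should be handled by the limiting procedure rather than directly. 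Everything else (closedness, hemisphericality, and convexity under Pompeiu--Hausdorff limits) is routine and parallels arguments already used in the proof of Proposition \ref{proposition 1} and Lemma \ref{lemma 5}.
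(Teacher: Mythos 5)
Your ``$\subset$'' direction is over-engineered and, as written, contains a false step: a Pompeiu--Hausdorff limit of spherical convex bodies need \emph{not} be hemispherical or spherical convex (for example $\overline{B(P,\pi/2-1/i)}\to H(P)$, and $H(P)$ is not hemispherical), so you cannot in general conclude that $W$ itself lies in $\mathcal{H}_{\mbox{\rm s-conv}}(S^n)$. Fortunately none of that is needed: $\mathcal{H}_{\rm Wulff}(S^n,P)\subset\mathcal{H}_{\mbox{\rm s-conv}}(S^n)$ directly from Definition~\ref{definition closure}, so the closures are nested and the inclusion is immediate --- which is all the paper says about it.

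The genuine gap is in the ``$\supset$'' direction, at exactly the point you flag as the main obstacle. Your diagonal sequence of bodies $W_{i,j}\in\mathcal{H}_{\rm Wulff}(S^n,P_{i,j})$ with $P_{i,j}\to\widehat P$ only shows that $W$ lies in the closure of the union $\bigcup_P\mathcal{H}_{\rm Wulff}(S^n,P)$; the statement requires $W\in\overline{\mathcal{H}_{\rm Wulff}(S^n,\widehat P)}$ for a \emph{single} $\widehat P$, i.e.\ an approximating sequence lying in one fixed $\mathcal{H}_{\rm Wulff}(S^n,\widehat P)$, and compactness of $S^n$ alone does not convert one into the other (a union of closures is in general strictly smaller than the closure of the union). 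To close this you would have to verify that, for all large indices, $\widehat P\in\mbox{int}(W_{i,j})$ and $W_{i,j}\cap H(-\widehat P)=\emptyset$, and nothing in your construction guarantees either. The paper avoids the drift entirely by choosing the common center \emph{first}, directly from $W$: if $W$ is hemispherical with an interior point it takes $P\in\mbox{int}(W)$ with $W\subset H(P)$; if $W$ is hemispherical without interior points it takes $P\in W$ and uses the fattenings $\overline{B(W_i,1/i)}$, which all contain $P$ as an interior point because $h(W_i,W)<1/i$ and stay inside $H(P)$ for large $i$; and in the remaining case it takes $P$ with $W\subset H(P)$ and sets $W_i=\overline{B(W,1/i)}\cap\overline{B(P,\pi/2-1/i)}$. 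All approximants then visibly lie in the one space $\mathcal{H}_{\rm Wulff}(S^n,P)$. Your proposal needs an analogous explicit choice of a fixed center tied to $W$ itself; without it the argument does not establish membership in any single $\overline{\mathcal{H}_{\rm Wulff}(S^n,P)}$.
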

}
\begin{proof}
\quad 
By Definition \ref{definition closure}, it is clear that any $W\in \bigcup_{P\in S^n}\overline{\mathcal{H}_{{\rm Wulff}}(S^n, P)} $ is an element of $\overline{\mathcal{H}_{\mbox{\rm s-conv }}(S^{n})}$. {\color{black} Thus, it is sufficient to} show the following inclusion:   
\[
\overline{\mathcal{H}_{\mbox{\rm s-conv }}(S^{n})}
\subset 
\bigcup_{P\in S^n}\overline{\mathcal{H}_{\rm Wulff}(S^n, P)}.  
\]
\par
We first show the above inclusion 
under the assumption that 
%
$W\in \overline{\mathcal{H}_{\mbox{\rm s-conv }}(S^{n})}$ 
is a hemispherical closed subset of $S^{n}$. 
Suppose {\color{black} that} $W$ has an interior point. 
Then, it is easily seen that there exists a point $P\in {\rm int}(W)$ {\color{black} such that} 
$W\subset H(P)$. 
Since $H(P)\in \overline{\mathcal{H}_{\rm Wulff}(S^{n}, P)}$, 
it follows that $W\in \overline{\mathcal{H}_{\rm Wulff}(S^{n}, P)}$.
Next, suppose that $W$ does not have an interior point. 
Since $W$ is hemispherical and closed, 
there exist a point $P\in W$ 
and a positive {\color{black} integer} number $N\in \mathbb{N}$ 
such that for any $i>N$, 
we have 
${\color{black} \partial\overline{B(W, \frac{2}{i})}\bigcap H(-P)=\emptyset}$.    
For any $i>N$,  
there exists a sequence 
\[
\{W_{i}\}_{i= 1, 2, 3, \dots}\subset \mathcal{H}_{\mbox{\rm s-conv}}(S^{n})
\]  
such that 
$h(W_{i}, W)<\frac{1}{i}$. 
Thus, we have the following:
\[ 
P\in W\subset \overline{B\left(W_{i}, \frac{1}{i}\right)}
\subset 
\overline{B\left(\overline{B\left(W, \frac{1}{i}\right)}, \frac{1}{i}\right)}
=\overline{B\left(W, \frac{2}{i}\right)}\subset H(P). 
\]
Therefore, {\color{black} it follows that for 
$\overline{B\left(W_{i}, \frac{1}{i}\right)}\in \mathcal{H}_{{\rm Wulff}}(S^{n}, P)$, 
we have the following:} 
\[
W=\lim_{i\to \infty}\overline{B\left(W_{i}, \frac{1}{i}\right)}\in 
\bigcup_{P\in S^{n}}\overline{\mathcal{H}_{\rm Wulff}(S^{n}, P)}.
\] 
\indent
Finally,{\color{black} we show Proposition {\color{black}\ref{relation}} 
in the case that 
$W$ is an element of $\overline{\mathcal{H}_{\mbox{\rm s-conv}}(S^{n})}$}. 
Notice that there exists a point $P\in S^{n}$ such that $W\subset H(P)$.    
For any positive interger $i$, define $W_{i}$ as follows. 
\[ 
W_{i}=\overline{B\left(W, \frac{1}{i} \right)}
\bigcap \overline{B\left(P, \frac{\pi}{2}-\frac{1}{i} \right)}.
\]
Then, 
it is easily seen that $W_{i}\in \mathcal{H}_{\rm Wulff}(S^{n}, P)$ 
for any $i\in \mathbb{N}$ and $W=\lim_{i\to \infty} W_{i}$.    
Therefore, 
$W$ belongs {\color{black}to} 
$\bigcup_{P\in S^n}\overline{\mathcal{H}_{\rm Wulff}(S^n, P)})$.
\end{proof}




\begin{thebibliography}{99}





\normalsize
\baselineskip=17pt

          
{\color{black}
\bibitem{barnsley}M.~Barnsley, 
\textit{Fractals Everywhere 2nd edition}, Morgan Kaufmann Pub., San Fransisco, 1993.   
\bibitem{falconer}K.~Falconer, 
\textit{Fractal Geometry--Mathematical Foundations and applications  2nd edition}, 
John Wiley \& Sons Ltd., Chichester, West Sussex, 2003.  
\bibitem{aperture} D.\ Kagatsume and T.\ Nishimura, \textit{Aperture of plane curves}, {J.~Singul.}, {\bf 12}(2015), 80--91.  
{\color{black}
{\color{black}
\bibitem{matousek}J.~Matousek, 
\textit{Lectures on Discrete Geometry}, Springer, 2002.   
}
}
\bibitem{nishimurasakemi2} T.\ Nishimura and Y.\ Sakemi, 
\textit{Topological aspect of Wulff shapes}, 
J. Math. Soc. Japan, {\bf 66} (2014), 89--109.
{\color{black}
\bibitem{taylor}J.~E.~Taylor, {\it Crystalline variational problems}, Bull. Amer. Math. Soc., 
{\bf 84}(1978), 568--588.     
}
\bibitem{wulff}G. Wulff, 
{\it Zur frage der geschwindindigkeit 
des wachstrums und der aufl\"osung der krystallflachen}, 
Z. Kristallographine und Mineralogie, {\bf 34}(1901), 449--530.
}
\end{thebibliography}
\end{document}